\newcommand{\textcyr}[1]{%
 {\fontencoding{OT2}\fontfamily{wncyr}\fontseries{m}\fontshape{n}\selectfont #1}}
\newcommand{\Sha}{{\mbox{\textcyr{Sh}}}}
\DeclareMathOperator{\Cl}{Cl}
\DeclareMathOperator{\disc}{disc}
\DeclareMathOperator{\ord}{ord}
\DeclareMathOperator{\rk}{rk}
\DeclareMathOperator{\im}{im}
\DeclareMathOperator{\Aut}{Aut}
\DeclareMathOperator{\Norm}{Norm}
\newcommand{\Selfake}{\mathrm{Sel}_\mathrm{fake}}
\renewcommand{\div}{\mathrm{div}}
\newcommand{\nd}{\mathrm{nd}}
\newcommand{\FF}{\mathbb{F}}	
\newcommand{\QQ}{\mathbb{Q}}
\newcommand{\RR}{\mathbb{R}}
\newcommand{\ZZ}{\mathbb{Z}}
\newcommand{\calO}{\mathcal{O}}
\newcommand{\fa}{\mathfrak{a}}
\newcommand{\fp}{\mathfrak{p}}
\newcommand{\fq}{\mathfrak{q}}
\newcommand{\ft}{\mathfrak{t}}
\newcommand{\legendre}[2]{\left(\frac{#1}{#2}\right)}
\newcommand{\tleg}[2]{\big(\frac{#1}{#2}\big)}
\newcommand{\act}[2]{{}^{#1}\!#2}
\newtheorem{theorem}{Theorem}[section]
\newtheorem*{theoremA}{Theorem A}
\newtheorem*{theoremB}{Theorem B}
\newtheorem{lemma}[theorem]{Lemma}
\newtheorem{corollary}[theorem]{Corollary}
\newtheorem{proposition}[theorem]{Proposition}
\theoremstyle{definition}
\newtheorem{definition}[theorem]{Definition}
\newtheorem*{questionA}{Question A}
\newtheorem*{questionB}{Question B}
\newtheorem{assumption}[theorem]{Assumption}
\theoremstyle{remark}
\newtheorem{remark}[theorem]{Remark}
\newcommand{\customlabel}[2]{%
\protected@write \@auxout {}{\string \newlabel {#1}{{#2}{}}}}
\begin{document}
\title{On congruent primes and class numbers of imaginary quadratic fields}
\subjclass[2000]{11G05; 11R29}
\keywords{Congruent numbers, Tate-Shafarevich groups, Class numbers, Descent}
%%%% ADD AUTHORS
\author{Nils Bruin}
\thanks{Research of first author supported by NSERC}
\address{Department of Mathematics, Simon Fraser University,
         Burnaby, BC V5A 1S6, Canada}
\email{nbruin@sfu.ca}
\urladdr{http://www.cecm.sfu.ca/\~{}nbruin}

\author{Brett Hemenway}
\address{Department of Mathematics, University of Michigan,
         Ann Arbor, MI 48109-1043, United States of America}

\date{April 27, 2012}

\begin{abstract}
We consider the problem of determining whether a given prime $p$ is a congruent number. We present an easily computed criterion that allows us to conclude that certain primes for which congruency was previously undecided, are in fact not congruent. As a result, we get additional information on the possible sizes of Tate-Shafarevich groups of the associated elliptic curves.

We also present a related criterion for primes $p$ such that $16$ divides the class number of the imaginary quadratic field $\QQ(\sqrt{-p})$. Both results are based on descent methods.

While we cannot show for either criterion individually that there are infinitely many primes that satisfy it nor that there are infinitely many that do not, we do exploit a slight difference between the two to conclude that at least one of the criteria is satisfied by infinitely many primes.
\end{abstract}

\maketitle

\section{Introduction}

The results in this article are inspired by two related questions.
\begin{enumerate}
 \item What exponents can occur for class groups of number fields?
 \item What exponents can occur for Tate-Shafarevich groups of abelian varieties?
\end{enumerate}
In particular, we consider the $2$-power part of class groups of imaginary quadratic fields and the $2$-power part of the Tate-Shafarevich groups of quadratic twists of the elliptic curve $E_1\colon y^2=x^3-x$; these are the curves that play a role in the classical \emph{congruent number problem}.

In either case, it is known that the \emph{size} of the $2$-power parts of the groups can be made arbitrarily large by making the $2$-torsion subgroups arbitrarily large. For class groups, these results come from Gauss's genus theory (see \cite{lemmermeyer:reciprocity}*{2.2}) and for Tate-Shafarevich groups from an analogous construction (see \cite{kramer:large_sha}).

We limit ourselves to imaginary quadratic fields $\QQ(\sqrt{-p})$, and quadratic twists $E_p\colon y^2=x^3-p^2x$, where in either case $p$ is a prime. It is known that there are infinitely many primes $p$ such that the group classes of fractional ideals modulo principal ideal $\Cl(\QQ(\sqrt{-p}))$ contains elements of order $2$, $4$, or $8$. Existing results establish a similar fact for for the Tate-Shafarevich group $\Sha(E_p)$ of $E_p$, namely that there are infinitely many primes $p$ such that $\Sha(E_p)[2]\simeq(\ZZ/2)^2$ and, if one assumes that elliptic curves of rank $2$ are extremely rare, that one has $\ZZ/4\hookrightarrow\Sha(E_p)$ for infinitely many $p$.
The proofs in either case consist of observing that the answer to either question is governed by the splitting of $p$ in some fixed number field. The Chebotarev Density Theorem then guarantees the existence of infinitely many $p$ with the desired property.

Our results provide a characterization of the primes for which the relevant groups are one step bigger. Unfortunately, the result does not seem to correspond to a splitting condition in some fixed extension anymore, so an infinite number of primes satisfying the criterion is not guaranteed. We introduce some notation to formulate our results precisely.

It follows from genus theory that for $p\equiv 3\pmod{4}$, the class number of $\QQ(\sqrt{-p})$ is odd. For $p\equiv 1\pmod{4}$, we write $h(-4p)=\#\Cl(\ZZ[\sqrt{-p}])$, which is equal to the class number of $\QQ(\sqrt{-p})$.
In this case genus theory also guarantees that $\Cl(\ZZ[\sqrt{-p}])/2\Cl(\ZZ[\sqrt{-p}])\simeq \ZZ/2$, so the $2$-power part of $\Cl(\ZZ[\sqrt{-p}])$ is cyclic.

\begin{questionA}\customlabel{Q:classnr}{A}
Given $e\geq0$, how can we characterize the primes $p$ such that $2^e\mid h(-4p)$?
\end{questionA}

Note that $2$ is ramified in $\QQ(\sqrt{-p})$ if $p\equiv 1\pmod 4$, so there is an ideal $\ft\subset\ZZ[\sqrt{-p}]$ such that $\ft^2=2\ZZ[\sqrt{-p}]$.
Since $\ZZ[\sqrt{-p}]$ does not contain an element of norm $2$, 
we have that $[\ft]\in\Cl(\ZZ(\sqrt{-p}))$ is of order $2$. Answers to Question~\ref{Q:classnr} can therefore take the form of descriptions of the sets
\[
\begin{aligned}
V(e)&=\{p\text{ prime}: p\equiv 1\pmod{4} \text{ and } 2^e\mid h(-4p)\},\\
    &=\{p\text{ prime}: p\equiv 1\pmod{4} \text{ and } [\ft]\in 2^{e-1}\Cl(\QQ(\sqrt{-p}))\}.
\end{aligned}
\]
Classical results together with Barrucand-Cohn (see \cite{barcoh:x2p32y2}) establish (see Section~\ref{S:notation} for notation)
\begin{subequations}
\begin{align}
\label{E:V(1)}V(1)&=\{p\text{ prime}: p \equiv 1\pmod{4}\},\\
\label{E:V(2)}V(2)&=\{p\text{ prime}: p \equiv 1\pmod{8}\},\\
\label{E:V(3)}V(3)&=\{p\text{ prime}: p \equiv 1\pmod{8} \text{ and }\legendre{1+i}{p}=1\}.
\end{align}
The set $V(3)$ consists exactly of the primes that completely split in $K_1=\QQ(\sqrt{1+i})=\QQ(\alpha)$, where $\alpha^4-2\alpha^2+2=0$. Let $\delta_p\in K_1$ be an algebraic integer satisfying $\Norm_{K_1/\QQ(i)}(\delta_p)=p$ and let $\fp_p$ be a prime of $K_1$ above $p$ such that $\delta_p\notin\fp$. We will check that the quadratic symbol $\tleg{\alpha\delta_p}{\fp}$ does not depend on the actual choices of $\delta_p$ and $\fp$. We prove
\begin{theoremA}\customlabel{T:V(4)}{A}
\begin{equation}\label{E:V(4)}
 V(4)=\{p\text{ prime}: p \equiv 1\pmod{8} \text{ and }\legendre{1+i}{p}=\legendre{\alpha\delta_p}{\fp_p}=1\}.\\
\end{equation}
\end{theoremA}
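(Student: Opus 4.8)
The plan is to reduce the divisibility $2^{4}\mid h(-4p)$ to a ``square class'' question and then resolve it by an explicit descent. Write $C$ for the $2$-primary part of $\Cl(\ZZ[\sqrt{-p}])$; by genus theory $C$ is cyclic and $[\ft]$ is its unique element of order $2$, so $2^{4}\mid h(-4p)$ is equivalent to $[\ft]\in 8C$. Since $V(4)\subseteq V(3)$ we may assume $p\equiv1\pmod8$ and $\legendre{1+i}{p}=1$, so by \eqref{E:V(3)} we already have $[\ft]\in 4C$, i.e.\ there is an ideal $\fa$ with $[\fa]^{4}=[\ft]$. Because $C$ is cyclic, $[\ft]\in 8C$ holds precisely when some (equivalently every) such fourth root $[\fa]$ of $[\ft]$ is itself a square in $\Cl$. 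So everything comes down to a criterion for a prescribed ideal class to be a square.

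To make this effective I would first write the cyclic quartic unramified extension $L\subset H$ that witnesses $[\ft]\in4C$ (equivalently $8\mid h(-4p)$) in explicit Kummer form over the genus field $L_{1}=\QQ(\sqrt{-p},i)$: since $\ZZ[i]$ has class number $1$, a prime of $\QQ(i)$ above $p$ is generated by an element of norm $p$, and $H=L_{1}(\sqrt{\gamma})$ for a suitable $\gamma$ built from such a generator together with roots of unity and the ramified datum at $2$. Pushing the descent one step further, ``$[\fa]$ is a square'' translates, via Kummer theory over $H$ (equivalently a $2$-isogeny or fake-Selmer-type computation) and the associated local conditions, into the vanishing of an obstruction whose ingredients live in $K_{1}=\QQ(i)(\sqrt{1+i})=\QQ(\alpha)$: this is the point at which the hypothesis $\legendre{1+i}{p}=1$ — equivalently, that $p$ splits completely in $K_{1}$, so that a prime $\fp_{p}$ of $K_{1}$ of residue degree $1$ over $p$ exists — is used, and it is the reason the element $\alpha\delta_{p}$ appears, with $\delta_{p}$ a norm-$p$ element of $\QQ(i)$ and $\alpha=\sqrt{1+i}$ supplying the extra square root forced by the ramified prime over $2$.

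Next I would show that the descent obstruction is a product, over the places of $K_{1}$, of local symbols, and that every place other than $\fp_{p}$ contributes trivially. At the archimedean place and at the odd primes different from $p$ this follows from the shape of $\alpha\delta_{p}$ (a unit, or an element whose relevant symbol is visibly trivial). At the prime above $2$ one uses $p\equiv1\pmod8$; here one must argue in completions, since $K_{1}/\QQ$ is ramified at $2$ and is only a $D_{4}$-extension, so global Galois arguments have to be replaced by local ones. A product formula for the Hilbert symbol then collapses the obstruction to the single symbol $\tleg{\alpha\delta_{p}}{\fp_{p}}$, which is therefore trivial exactly when $[\fa]$ is a square, i.e.\ exactly when $2^{4}\mid h(-4p)$. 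Finally one verifies that $\tleg{\alpha\delta_{p}}{\fp_{p}}$ is well defined: $\delta_{p}$ is determined up to a unit of $\QQ(i)$, and all four units $\pm1,\pm i$ are squares modulo $\fp_{p}$ because $8\mid p-1$; the two primes of $K_{1}$ above a given prime of $\QQ(i)$ are swapped by $\alpha\mapsto-\alpha$, which is harmless since $-1$ is a square modulo $\fp_{p}$; and the choice of the prime of $\QQ(i)$ above $p$ versus its conjugate is immaterial either by the evident symmetry of the two situations or, a posteriori, because both choices compute the intrinsic condition $2^{4}\mid h(-4p)$.

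The main obstacle I expect is twofold: identifying precisely the Kummer class that governs the second descent, and carrying out the local analysis at $2$ — proving that the wildly ramified place contributes trivially to the obstruction exactly under $p\equiv1\pmod8$, while keeping careful track of the several primes of $K_{1}$ lying over $p$ and of their conjugates. Getting this bookkeeping right is what makes the clean criterion $\tleg{\alpha\delta_{p}}{\fp_{p}}=1$ fall out.
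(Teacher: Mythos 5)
Your opening reduction is fine: since the $2$-part $C$ of $\Cl(\ZZ[\sqrt{-p}])$ is cyclic with $[\ft]$ its unique element of order $2$, the condition $16\mid h(-4p)$ is indeed equivalent to a fourth root of $[\ft]$ being a square, and you have correctly guessed the cast of characters ($\QQ(i)$, $K_1=\QQ(\alpha)$, the element $\alpha\delta_p$). But the entire content of the theorem lies in the step you yourself defer to ``identifying precisely the Kummer class that governs the second descent, and carrying out the local analysis'': you assert that ``$[\fa]$ is a square'' translates into the vanishing of an obstruction built from $\alpha\delta_p$, but you give no mechanism for producing that obstruction. The paper supplies the mechanism with machinery your sketch never invokes: Dirichlet's relation $h_1=\tfrac12 h_0h'$ (Proposition~\ref{P:cohn corr}) shows that extension of ideals $\Cl(H_0)[2^\infty]\to\Cl(H_1)[2^\infty]$ is surjective with kernel of order $2$, which moves the question to $H_1=\QQ(i,\sqrt p)$ viewed over the principal ideal domain $\ZZ[i]$; Lemma~\ref{L:cglemma} then converts $[\ft_1]\in 4\Cl(H_1)$ into solvability of $x^2+py^2=(1+i)z^4$ with a primitivity condition (Lemma~\ref{L:cg iso criterion}); factoring over $K_1$ turns this into the conic \eqref{E:V4conic}, whose everywhere-local solvability --- controlled by Lemma~\ref{L:soroosh} at the primes over $p$ and checked directly at $2$ and $\infty$ --- is equivalent to global solvability by Hasse--Minkowski. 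Without this chain (or an equivalent explicit construction), your plan restates the theorem rather than proving it.

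Two specific points in your sketch are also wrong as written. First, $\delta_p$ is an element of $K_1$ with $\Norm_{K_1/\QQ(i)}(\delta_p)=p$, not ``a norm-$p$ element of $\QQ(i)$''; if one took $\delta_p=u+iv\in\ZZ[i]$, then Lemma~\ref{L:soroosh} gives $\tleg{u+iv}{\fp_p}=\tleg{i}{\fp_p}=1$, so your criterion would degenerate to $\tleg{\alpha}{\fp_p}=1$, which is not the statement being proved. Second, the logic ``every place other than $\fp_p$ contributes trivially'' followed by ``a product formula then collapses the obstruction to the single symbol at $\fp_p$'' is self-defeating: if a product formula held and all other local terms were trivial, the symbol at $\fp_p$ would be forced to equal $1$ for every $p$ and the criterion would be vacuous. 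What actually happens is that there are four primes of $K_1$ over $p$, the local conditions at them are linked by Lemma~\ref{L:soroosh}-type identities so that one independent symbol remains, the other places impose no condition, and sufficiency then requires Hasse--Minkowski (plus a primitivity check), not reciprocity. Your well-definedness argument is similarly too quick: the automorphism $\alpha\mapsto-\alpha$ moves $\delta_p$ and the set of admissible $\fp_p$ simultaneously, so independence of the choices again rests on Lemma~\ref{L:soroosh} rather than on $-1$ being a square modulo $p$.
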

\end{subequations}
In \cite{stevenhagen:2powers}*{Theorem~2}, which provides references for \eqref{E:V(1)},\eqref{E:V(2)},\eqref{E:V(3)}, there is a different criterion for membership of $V(4)$, in terms of the $2$-adic logarithm of the fundamental unit of $\QQ(\sqrt{p})$.

For $\Sha(E_p)$ we proceed in a way similar to Question~\ref{Q:classnr}. For an abelian group $G$ we write $G[2^\infty]$ for 
its $2$-primary subgroup.
\begin{questionB}\customlabel{Q:congnum}{B}
Given $e\geq 0$, how can we characterize the primes $p$ such that $\Sha(E_p)[2^\infty]$ contains element of order $2^e$?
\end{questionB}
In classical terminology, a positive integer is called \emph{congruent} if $E_n(\QQ)$ has positive rank. In order to avoid confusion with other uses of the word, we italicize it whenever used with this meaning.

It is already known (see Section~\ref{S:congknown}) that if $n=p$ is a prime with $p\not\equiv 1\pmod{8}$ then $\Sha(E_p)[2^\infty]$ is trivial.
Therefore, we concentrate on the case $p\equiv 1\pmod{8}$. Then either $p$ is \emph{congruent} or $\Sha(E_p)[2]\simeq(\ZZ/2)^2$. In fact, we can write down $3$ principal homogeneous spaces of $E$, given by
\begin{equation}\label{E:Cpi}
\begin{aligned}
C_{p,1}\colon& y^2=p(x^4-6x^2+1)=p(x^2+2x-1)(x^2-2x-1)\\
C_{p,2}\colon& y^2=p(x^4+4)=p(x^2+2x+2)(x^2-2x+2)\\
C_{p,3}\colon& y^2=p(x^4+1),
\end{aligned}
\end{equation}
which have points everywhere locally. They represent possibly trivial classes $\xi_1,\xi_2,\xi_3\in \Sha(E_p)[2]$ which generate the group and satisfy $\xi_1+\xi_2=\xi_3$. To test the triviality of $\xi_i$, we are led to considering
\[
\begin{aligned}
W(e)=\{p\text{ prime}: p\equiv 1\pmod{8} \text{ and } \xi_i\in 2^{e-1}\Sha(E_p)\}.
\end{aligned}
\]
The \emph{Cassels-Tate pairing} \cite{cassels:arith_g1-IV} implies that if there exists an $e\geq 1$ such that
$\xi_i\notin 2^{e}\Sha(E_p)$ (i.e., $\xi_i$ is not totally $2$-divisible) then $\Sha(E_p)[2]\simeq(\ZZ/2)^2$. In that case no $C_{p,i}$ has a rational point and $p$ is not \emph{congruent}.  also implies that the definition of $W(e)$ does not depend on which $\xi_i$ is chosen.

For Question~\ref{Q:classnr}, we know that $[\ft]$ is nontrivial, so divisibility of this class directly yields results on $h(-4p)$. We do not have a corresponding guarantee for Question~\ref{Q:congnum}. For instance, if all $\xi_i$ are trivial, as happens for $p=41$, then $p\in W(e)$ for all $e$ but $\Sha(E_p)[2^\infty]=0$. On the other hand, if we establish that $p\in W(e)$ and $p\notin W(e+1)$ then it does follow that $\xi_i$ is nontrivial and we can conclude that $2^e$ divides  the exponent of $\Sha(E_p)$.

Results attributed to A.~ Genocchi and L.~Bastien (see \cite{dickson:histvolII}*{Chapter~XVI} and \cite{tunnell:congnum}) essentially establish
\begin{subequations}
\begin{align}
\label{E:W(1)} W(1)&=\{p\text{ prime}: p \equiv 1\pmod{8}\},\\
\label{E:W(2)} W(2)&=\{p\text{ prime}: p \equiv 1\pmod{8} \text{ and }\legendre{1+i}{p}=1\}.
\end{align}
In the statement below, we use the same $\delta_p,\fp_p$ as in Theorem~\ref{T:V(4)}. 
Furthermore, let $\zeta$ be a primitive eighth root of unity. For $p\equiv 1\pmod 8$, we have that $\zeta\in\QQ_p$. We prove
\begin{theoremB}\customlabel{T:W(3)}{B}
\begin{equation}\label{E:W(3)}
 W(3)=\{p\text{ prime}: p \equiv 1\pmod{8} \text{ and }\legendre{1+i}{p}=\legendre{\zeta\alpha\delta_p}{\fp_p}=1\}.\\
\end{equation}
\end{theoremB}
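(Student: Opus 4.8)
The plan is to carry out a second descent on the homogeneous spaces $C_{p,i}$ of \eqref{E:Cpi}. We may assume $p\equiv1\pmod8$ and $\legendre{1+i}{p}=1$, since otherwise both sides of \eqref{E:W(3)} exclude $p$; by \eqref{E:W(2)} this hypothesis says exactly that $\xi_i\in2\Sha(E_p)$, so that a first everywhere locally soluble refinement of $C_{p,i}$ already exists and $p\in W(3)$ asks whether a finer one does --- equivalently, whether the class $\eta\in\Sha(E_p)[4]$ with $2\eta=\xi_i$ can be chosen divisible by $2$ in $\Sha(E_p)$. By the Cassels--Tate argument recalled above this is independent of $i$, so we may work with whichever $C_{p,i}$ is most convenient; I would take $C_{p,2}\colon y^2=p(x^4+4)=p(x^2+2x+2)(x^2-2x+2)$, whose branch points are $\pm1\pm i$. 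Their differences are $\pm2$, $\pm2i$ and $\pm2(1\pm i)$, so the square-root data entering a second descent involves $\sqrt2$, $\sqrt{2i}=\sqrt2\,\zeta$ and $\sqrt{2(1+i)}=\sqrt2\,\alpha$, with $\zeta$ a primitive eighth root of unity and $\alpha^2=1+i$ as in Theorem~\ref{T:V(4)}. It is the presence of $\zeta$ here --- not met with the quartic $x^4-2x^2+2$ governing Theorem~\ref{T:V(4)} --- that will produce the extra factor in \eqref{E:W(3)}.

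Since $E_p$ has complex multiplication by $\ZZ[i]$ and $1+i$ generates the prime of $\ZZ[i]$ above $2$, the descent is naturally organised over $\QQ(i)$, the relevant Kummer step being the quadratic extension $K_1=\QQ(\sqrt{1+i})\supseteq\QQ(i)$ already introduced. Concretely, the finer coverings $D$ of $C_{p,2}$ that map down appropriately to $E_p$ are parametrised by classes built from $\QQ(i)^*/(\QQ(i)^*)^2$ and $K_1^*/(K_1^*)^2$ subject to a norm relation, and $\xi_i\in4\Sha(E_p)$ precisely when one such $D$ has points at every place. The step I expect to be the main obstacle is making this reduction precise: passing cleanly between the pointless torsor $C_{p,2}$ and the genuine Selmer group of $E_p$, and checking --- via the fake Selmer groups $\Selfake$ and the non-degeneracy of the Cassels--Tate pairing --- that everywhere-local solubility of some $D$ detects $2$-divisibility of $\xi_i$ in $\Sha(E_p)$ itself, and not merely membership in the image of a Selmer map.

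Once the question is local I would examine the places one at a time. Away from $\{2,\infty,p\}$ the covering $D\to C_{p,2}$ is unramified and the curves have good reduction, so local points exist automatically; at $\infty$ and at $2$ the hypothesis $p\equiv1\pmod8$, which makes $p$ a square in $\RR$ and in $\QQ_2$, renders the local conditions vacuous, exactly as in the derivations of \eqref{E:W(1)} and \eqref{E:W(2)}. Hence the entire obstruction sits at $p$. Because $\legendre{1+i}{p}=1$, the prime $p$ splits completely in $K_1$; a choice of prime $\fp_p\mid p$ identifies $K_{1,\fp_p}$ with $\QQ_p$, and the element $\delta_p$ with $\Norm_{K_1/\QQ(i)}(\delta_p)=p$ furnishes the local parameter fixing the norm that appears in the descent. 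Unwinding the parametrisation at $p$ and applying Hensel's lemma, the existence of a $D$ with a $\QQ_p$-point becomes the statement that a specific element of $\QQ_p$ is a square; tracking the leading coefficient $p$, the $2$-power contributions, and the branch points through this computation converts that statement into $\legendre{\zeta\alpha\delta_p}{\fp_p}=1$ (note $\zeta\in\QQ_p$ since $p\equiv1\pmod8$). The computation is parallel to the one behind \eqref{E:V(4)}, the only difference being the extra local unit $\zeta$ coming from $\sqrt{2i}$; this is the ``slight difference between the two'' criteria highlighted in the abstract, and is what ultimately permits the conclusion that at least one of the two sets of primes is infinite.

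It then remains to check, just as for Theorem~\ref{T:V(4)}, that $\legendre{\zeta\alpha\delta_p}{\fp_p}$ does not depend on the auxiliary choices: another element of $K_1$ of norm $p$ differs from $\delta_p$ by a unit of $\QQ(i)$, and the second prime of $K_1$ above $p$ gives a Galois-conjugate local embedding; in each case the square class at $\fp_p$ is determined by $p\equiv1\pmod8$ together with the norm relation, so the symbol --- and hence the set in \eqref{E:W(3)} --- is well defined. Besides the cohomological reduction just discussed, the remaining delicate point is the computation at $p$: the $2$-adic valuations of the branch points and of the leading coefficient must be bookkept carefully, so that $\zeta$ enters to the first power rather than, say, as $\zeta^2=i$ or absorbed into a square.
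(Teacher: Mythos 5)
Your sketch identifies the right general shape of the answer (a second‑descent computation whose only $p$‑dependent local condition sits at $p$ and comes out as a quadratic symbol over $K_1$), but it has a genuine gap at its core: a second descent on $C_{p,i}$ itself can only decide whether $\xi_i\in 2\Sha(E_p)$, i.e.\ membership in $W(2)$, which you have already assumed. Deciding $\xi_i\in 4\Sha(E_p)$ requires going one level deeper, and your proposal never supplies the mechanism for doing so --- you yourself flag this reduction as ``the main obstacle'' and then leave it open. The paper's mechanism is the $2$‑isogeny $\phi_i\colon E_p\to E_{p,i}$: the locally soluble refinement of $C_{p,i}$ is realised as an explicit quartic torsor $D_{p,i}$ under the isogenous curve $E_{p,i}$ (Lemmas~\ref{L:Dp1} and \ref{L:Dp2}), the first isogeny descent shows $\Sha(E_{p,i})[\phi_i^\vee]=0$, and Lemma~\ref{L:isosha} then converts ``$\xi_i\in4\Sha(E_p)$'' into ``$[D_{p,i}]\in2\Sha(E_{p,i})$'', which an ordinary second descent (on $D_{p,i}$, over $\QQ$, with \'etale algebra $N_1$ or $K_1$, following Proposition~\ref{P:descent Dp1}) can answer. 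Your alternative suggestion --- organising the extra divisibility via the CM isogeny $1+i$ over $\QQ(i)$ --- is precisely the kind of base‑extension argument the authors report they could not make work for all relevant $p$; you would in any case need to control the kernel of $\Sha(E_p/\QQ)\to\Sha(E_p/\QQ(i))$ and an analogue of the vanishing of $\Sha[\phi^\vee]$ for the CM isogeny, none of which is addressed.

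Two further points would derail the computation even granting the reduction. First, the local condition at $2$ is not vacuous: in Proposition~\ref{P:descent Dp1} the image of $\mu_{\QQ_2}$ eliminates exactly half of the candidate classes (those of the form $\eta\pi_i$, resp.\ $(\alpha+1)\pi_i$); what is true is only that this condition does not depend on $p$. If you discard it, the surviving candidates are tested at $p$ by a different symbol and the criterion you derive is a priori weaker than the stated equality for $W(3)$. Second, the passage from ``the candidate class survives at $p$'' to the single symbol $\legendre{\zeta\alpha\delta_p}{\fp_p}$ is not a routine Hensel argument: it requires normalised generators $\pi$ of the primes of $M_2$ above $p$, a comparison with the images of the $2$‑torsion points, and the collapsing of products of Galois conjugates of $\pi$ via the reciprocity statements of Lemmas~\ref{L:soroosh} and \ref{L:soroosh Z[i]}. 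Without that bookkeeping you cannot certify that the unit entering the symbol is $\zeta$ rather than $i=\zeta^2$ or $1$ --- which, as you note, is exactly the feature on which the corollaries comparing $V(4)$ and $W(3)$ depend.
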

\end{subequations}

While the criteria in Theorems~\ref{T:V(4)} and \ref{T:W(3)} do not immediately guarantee that there are infinitely many primes satisfying them, the fact that the descriptions do not completely agree allows us to conclude:
\begin{corollary}
At least one of $W(3)$ and $V(4)$ is infinite.
\end{corollary}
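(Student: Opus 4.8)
The plan is to localize the difference between criteria~\eqref{E:V(4)} and \eqref{E:W(3)}. Both $V(4)$ and $W(3)$ are subsets of $V(3)=\{p\text{ prime}:p\equiv 1\pmod 8,\ \legendre{1+i}{p}=1\}$, and the only discrepancy between \eqref{E:V(4)} and \eqref{E:W(3)} is the extra factor $\zeta$ inside the quadratic symbol. So the first step is to evaluate $\legendre{\zeta}{\fp_p}$. For $p\equiv 1\pmod 8$ the reduction of $\zeta$ modulo $\fp_p$ is an element of order $8$ in $\FF_p^\times$, and such an element is a square if and only if $8\mid (p-1)/2$, i.e. if and only if $p\equiv 1\pmod{16}$. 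Thus $\legendre{\zeta}{\fp_p}=1$ when $p\equiv 1\pmod{16}$ and $\legendre{\zeta}{\fp_p}=-1$ when $p\equiv 9\pmod{16}$. (That this symbol is well defined needs no separate argument: it is the quotient $\legendre{\zeta\alpha\delta_p}{\fp_p}/\legendre{\alpha\delta_p}{\fp_p}$ of the two symbols already shown to be well defined in Theorems~\ref{T:V(4)} and \ref{T:W(3)}.)

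Now suppose $p\in V(3)$ and $p\equiv 9\pmod{16}$. Multiplicativity of the quadratic symbol gives
\[
\legendre{\zeta\alpha\delta_p}{\fp_p}=\legendre{\zeta}{\fp_p}\legendre{\alpha\delta_p}{\fp_p}=-\legendre{\alpha\delta_p}{\fp_p},
\]
so, comparing \eqref{E:V(4)} and \eqref{E:W(3)}, we get $p\in V(4)$ if and only if $p\notin W(3)$; in particular every such $p$ lies in $V(4)\cup W(3)$. It therefore suffices to show that
\[
S=\{p\text{ prime}:p\in V(3)\text{ and }p\equiv 9\pmod{16}\}
\]
is infinite, for then $V(4)\cup W(3)\supseteq S$ is infinite and at least one of $V(4)$, $W(3)$ must be.

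To prove $S$ infinite I would invoke the Chebotarev density theorem. By the discussion preceding Theorem~\ref{T:V(4)}, $V(3)$ is exactly the set of primes that split completely in $K_1=\QQ(\sqrt{1+i})$, equivalently in its Galois closure $\widetilde K_1$; here $\QQ(\zeta_8)\subseteq\widetilde K_1$ (as $\sqrt2,i\in\widetilde K_1$) and $\mathrm{Gal}(\widetilde K_1/\QQ)\cong D_4$, so the maximal abelian subextension of $\widetilde K_1/\QQ$ has degree $4$ and, containing $\sqrt 2$ and $i$, equals $\QQ(\zeta_8)$. Since $\QQ(\zeta_{16})/\QQ$ is abelian this forces $\widetilde K_1\cap\QQ(\zeta_{16})=\QQ(\zeta_8)$. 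Set $L=\widetilde K_1\QQ(\zeta_{16})$. Then $\mathrm{Gal}(L/\widetilde K_1)\cong\mathrm{Gal}(\QQ(\zeta_{16})/\QQ(\zeta_8))$ has order $2$ and is normal in $\mathrm{Gal}(L/\QQ)$ because $\widetilde K_1/\QQ$ is Galois; hence its nontrivial element $\sigma$ is central and $\{\sigma\}$ is a full conjugacy class. By Chebotarev there are infinitely many primes $p$ with $\mathrm{Frob}_p=\sigma$. For each such $p$ we have $\mathrm{Frob}_p|_{\widetilde K_1}=\mathrm{id}$, so $p$ splits completely in $\widetilde K_1$ and $p\in V(3)$; and $\mathrm{Frob}_p|_{\QQ(\zeta_{16})}$ is nontrivial while fixing $\QQ(\zeta_8)$, which (since $\mathrm{Gal}(\QQ(\zeta_{16})/\QQ)\cong(\ZZ/16)^\times$) means $p\equiv 9\pmod{16}$. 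Thus $S$ is infinite, and the corollary follows.

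The one step needing genuine care is the identity $\widetilde K_1\cap\QQ(\zeta_{16})=\QQ(\zeta_8)$: concretely it asserts that splitting completely in $K_1$ does not already force $p\equiv 1\pmod{16}$, which is precisely the ``slight difference'' between Theorems~\ref{T:V(4)} and \ref{T:W(3)} that gives the corollary its content. The remaining ingredients --- multiplicativity of the residue symbol, the elementary computation of $\legendre{\zeta}{\fp_p}$, and the Chebotarev application --- are routine.
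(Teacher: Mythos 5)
Your proof is correct and follows essentially the same route as the paper: both observe that the criteria for $V(4)$ and $W(3)$ differ exactly by the factor $\legendre{\zeta}{\fp_p}$, which equals $-1$ precisely when $p\equiv 9\pmod{16}$, so every prime in $V(3)=W(2)$ with $p\equiv 9\pmod{16}$ lies in exactly one of the two sets. The only difference is that you supply a full Chebotarev argument (correctly identifying $\widetilde K_1\cap\QQ(\zeta_{16})=\QQ(\zeta_8)$) for the infinitude of such primes, a point the paper simply asserts.
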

\begin{proof}
Note that $W(2)=V(3)$ contains infinitely many primes $p$ satisfying $p\equiv 9\pmod{16}$. For these primes we have $\tleg{\zeta}{p}=-1$, which is exactly the symbol by which the descriptions of $V(4)$ and $W(3)$ differ. Therefore we have either $p\in V(4)$ or $p\in W(3)$. It follows that at least one set must be infinite. 
\end{proof}

In Section~\ref{S:implications} we derive some more results along these lines. Let us conclude with noting that the criteria for $W(3)$ and $V(4)$ are easy to test computationally for individual primes.

\begin{remark}\label{R:1e200}
It is easy to compute with a computer algebra system that $10^{200}+16737$ is the first prime beyond $10^{200}$ such that $p\in V(3)=W(2)$, but $p\notin V(4), W(3)$ and that $q=10^{200}+28729$ is the first such prime such that $q\in V(4)$ but $q\notin W(3)$. In particular neither prime is \emph{congruent}.
\end{remark}

\section{Implications}\label{S:implications}

Just from equations \eqref{E:W(1)} and \eqref{E:W(2)} it follows that there are infinitely many primes $p\in W(1)\setminus W(2)$. Hence there are infinitely many primes $p$ with $(\ZZ/2\ZZ)^2\hookrightarrow\Sha(E_p)$.

Note that elliptic curves of rank bigger than $1$ seem very rare, so one would expect that for most $p\in W(2)$ it is still the case that at least one $\xi_i$ is non-trivial. Indeed, the discussion in \cite{rubsil:ranks}*{Section~7} suggests that the following is plausible.
\begin{assumption}[Goldfeld for primes]\label{A:goldfeld}
The primes $p$ for which $E_p(\QQ)$ has rank $2$ have asymptotic density $0$ in the set of all primes.
\end{assumption}
With this assumption, Equation~\eqref{E:W(2)} would imply that there are infinitely many $p$ for which $\ZZ/4\hookrightarrow \Sha(E_p)$. If in addition we assume that only the trivial element in $\Sha(E_p)$ is totally divisible, then \cite{dokchitser:BSDmod2}*{Corollary~4.20} implies that the parity conjecture holds for $E_p$. This would exclude the possibility that  $E_p(\QQ)$ has rank $1$ for $p\equiv 1\pmod{8}$ and we obtain that for infinitely many $p$ we have $(\ZZ/4)^2\hookrightarrow \Sha(E_p)$.

Numerical data suggests that $V(4)$ has asymptotic density $\frac{1}{2}$ in $V(3)$. Indeed, it has been conjectured \cites{ cohn-lagarias:density2, cohn-lagarias:density1} that such a density exists, but to our knowledge this conjecture is still open. Comparison of our descriptions of $V(4)$ and $W(3)$ shows that $W(3)$ would have an asymptotic density if and only if $V(4)$ has one. At least one would expect that $V(4)$ and $W(3)$ are both infinite.  We can combine Theorems~\ref{T:V(4)} and \ref{T:W(3)} to prove half of that.
\begin{corollary}
At least one of the following statements is true.
\begin{itemize}
\item[(a)] There are infinitely many primes $p$ such that $(\ZZ/4)^2\hookrightarrow\Sha(E_p)$.
\item[(b)] There are infinitely many primes $p$ such that $16\mid h(-4p)$.
\end{itemize}
\end{corollary}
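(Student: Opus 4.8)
The plan is to re-run the symbol comparison behind the preceding corollary, but anchored at the residue class $p\equiv 1\pmod{16}$ instead of $p\equiv 9\pmod{16}$; this small change is precisely what converts the set-theoretic dichotomy ``$W(3)$ or $V(4)$ is infinite'' into the group-theoretic statement we want. Concretely, I would take the set $\Pi$ of primes $p$ with $p\equiv 1\pmod{16}$ and $\legendre{1+i}{p}=1$. These are exactly the primes that split completely in the compositum of $\QQ(\zeta_{16})$ with the splitting field of $x^4-2x^2+2$, so $\Pi$ is infinite by the Chebotarev Density Theorem (the argument in the proof of the previous corollary, with $9$ replaced by $1$). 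Every $p\in\Pi$ lies in $W(2)=V(3)$, and the key observation is that for $p\equiv 1\pmod{16}$ the primitive eighth root of unity $\zeta$ is a square modulo $p$, so $\legendre{\zeta}{\fp_p}=1$ and hence $\legendre{\zeta\alpha\delta_p}{\fp_p}=\legendre{\alpha\delta_p}{\fp_p}$. By Theorems~\ref{T:V(4)} and~\ref{T:W(3)} this means that on $\Pi$ the sets $V(4)$ and $W(3)$ coincide: for $p\in\Pi$ we have $p\in V(4)$ if and only if $p\in W(3)$.

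Next I would partition $\Pi=(\Pi\cap V(4))\sqcup(\Pi\setminus V(4))$; since $\Pi$ is infinite, one of the two parts is infinite. If $\Pi\cap V(4)$ is infinite, then by the definition of $V(4)$ there are infinitely many primes $p$ with $16\mid h(-4p)$, which is (b). If $\Pi\setminus V(4)$ is infinite, fix such a $p$; then $p\in W(2)\setminus W(3)$, that is, each $\xi_i\in 2\Sha(E_p)$ but $\xi_i\notin 4\Sha(E_p)$. In particular $\xi_i$ is not totally $2$-divisible, so by the Cassels--Tate input recalled before Question~\ref{Q:congnum} we get $\Sha(E_p)[2]\simeq(\ZZ/2)^2$ (with basis $\xi_1,\xi_2$) and $p$ is not \emph{congruent}; moreover $\Sha(E_p)[2]=\langle\xi_1,\xi_2\rangle\subseteq 2\Sha(E_p)$. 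Writing a basis $x,y$ of $\Sha(E_p)[2]$ as $x=2x'$ and $y=2y'$, the elements $x',y'$ have order $4$ and generate a subgroup isomorphic to $(\ZZ/4)^2$, so $(\ZZ/4)^2\hookrightarrow\Sha(E_p)$; as there are infinitely many such $p$, this is (a).

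The step that needs care, and the whole reason for taking $p\equiv 1\pmod{16}$ rather than $p\equiv 9\pmod{16}$, is the passage from ``$p\notin W(3)$'' to a genuine statement about $\Sha(E_p)$. A \emph{congruent} prime $p$ has all $\xi_i$ totally $2$-divisible and therefore lies in $W(e)$ for every $e$; so in the $p\equiv 9\pmod{16}$ regime, where $V(4)$ and $W(3)$ are complementary within $V(3)$, the information ``$p\in W(3)$'' could be vacuous and would give no lower bound on $\Sha(E_p)$. Choosing $p\equiv 1\pmod{16}$ makes $V(4)$ and $W(3)$ \emph{equal} on $\Pi$, so that $p\notin W(3)$ forces $\xi_i\notin 4\Sha(E_p)$, which simultaneously rules out congruency and, together with $p\in W(2)$, yields $\Sha(E_p)[2]\subseteq 2\Sha(E_p)$ and hence the embedding $(\ZZ/4)^2\hookrightarrow\Sha(E_p)$. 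The only other thing to check is the elementary group theory in the last step (that two order-$4$ elements dividing a basis of the $2$-torsion span a copy of $(\ZZ/4)^2$); this is immediate, and notably it does not require knowing that $\Sha(E_p)$ is finite.
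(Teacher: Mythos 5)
Your proof is correct and follows essentially the same route as the paper: both hinge on the observation that for $p\equiv 1\pmod{16}$ the symbol $\tleg{\zeta}{\fp_p}$ is trivial, so $V(4)$ and $W(3)$ coincide on that residue class, and then split the infinitely many primes of $W(2)$ in that class between the two alternatives. The only difference is that you spell out in full the deduction that $p\in W(2)\setminus W(3)$ forces $(\ZZ/4)^2\hookrightarrow\Sha(E_p)$, which the paper leaves implicit from its earlier discussion.
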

\begin{proof}
The first statement holds for primes $p\in W(2)\setminus W(3)$, while the second statement holds for primes $p\in V(4)$. The intersections of $V(4)$ and $W(3)$ with $p\equiv 1\pmod{16}$ coincide. If $V(4)$ were finite then there would be only finitely many primes in $W(3)$ that satisfy $p\equiv 1\pmod{16}$. Since $W(2)$ contains infinitely many such primes, the corollary follows.
\end{proof}

Again, using Assumption~\ref{A:goldfeld} we can obtain a stronger, conditional result.
\begin{corollary}
Under Assumption~\ref{A:goldfeld}, at least one of the following statements is true.
\begin{itemize}
\item[(a)] There are infinitely many primes $p$ such that $\ZZ/8\hookrightarrow\Sha(E_p)$.
\item[(b)] There are infinitely many primes $p$ such that $16\mid h(-4p)$.
\end{itemize}
\end{corollary}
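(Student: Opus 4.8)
The plan is to follow the pattern of the preceding corollary, but now passing from $2$-divisibility to $4$-divisibility of the classes $\xi_i$. The group-theoretic core is the observation that if $p\equiv 1\pmod 8$ is not \emph{congruent} and $p\in W(3)$, then $\ZZ/8\hookrightarrow\Sha(E_p)$. Indeed, for such $p$ we have $\Sha(E_p)[2]\simeq(\ZZ/2)^2$, so $\xi_1,\xi_2,\xi_3$ generate this group; since $\xi_1+\xi_2=\xi_3$ none of them can vanish, so each $\xi_i$ has order exactly $2$. Membership in $W(3)$ gives $\xi_1=4\eta$ for some $\eta\in\Sha(E_p)$, whence $8\eta=2\xi_1=0$ while $4\eta=\xi_1\neq 0$, so $\eta$ has order exactly $8$. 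Hence it suffices to prove that \emph{either} $V(4)$ is infinite \emph{or} $W(3)$ contains infinitely many non-\emph{congruent} primes: in the first case statement (b) holds, because $16\mid h(-4p)$ forces $p\equiv 1\pmod 4$ (the class number of $\QQ(\sqrt{-p})$ being odd otherwise) and is then equivalent to $p\in V(4)$; in the second case statement (a) holds by the observation just made.

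Assume therefore that $V(4)$ is finite. As in the proofs of the two preceding corollaries, every prime $p\equiv 9\pmod{16}$ satisfies $\tleg{\zeta}{p}=-1$, and this is exactly the symbol by which the criteria of Theorems~\ref{T:V(4)} and~\ref{T:W(3)} differ; hence every prime of $W(2)$ with $p\equiv 9\pmod{16}$ lies in \emph{exactly one} of $V(4)$ and $W(3)$. Since $V(4)$ is finite, $W(3)$ contains all but finitely many primes of $S:=\{p\text{ prime}:p\equiv 9\pmod{16}\}\cap W(2)$. A Chebotarev computation shows that $S$ has positive density: by~\eqref{E:W(2)} the set $W(2)=V(3)$ consists of the primes splitting completely in $K_1$, equivalently in its Galois closure $\widetilde{K_1}$, which has degree $8$ over $\QQ$ and contains $\QQ(\zeta_8)$; moreover $\widetilde{K_1}$ is linearly disjoint from $\QQ(\zeta_{16})$ over $\QQ(\zeta_8)$, so the additional condition $p\equiv 9\pmod{16}$ picks out a nonempty union of Frobenius classes in the degree-$16$ field $\widetilde{K_1}\cdot\QQ(\zeta_{16})$. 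Thus $W(3)$ contains a positive-density set of primes $p\equiv 9\pmod{16}$.

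It remains to discard the \emph{congruent} primes from this set. A $2$-descent shows $\rk E_p(\QQ)\le 2$ for $p\equiv 1\pmod 8$; moreover the global root number of $E_p$ is $+1$ for such $p$, so by the parity conjecture — a theorem for elliptic curves over $\QQ$ — the rank of $E_p(\QQ)$ is even. Hence a \emph{congruent} prime $p\equiv 1\pmod 8$ has $\rk E_p(\QQ)=2$, and by Assumption~\ref{A:goldfeld} these primes have density $0$ among all primes. Removing them from the positive-density set constructed above still leaves infinitely many primes, each of which is a non-\emph{congruent} prime lying in $W(3)$; by the first paragraph, $\ZZ/8\hookrightarrow\Sha(E_p)$ for every one of them, so (a) holds.

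The step I expect to require the most care is the density bookkeeping of the last two paragraphs: one must check that the splitting set $W(2)$ genuinely meets the progression $9\bmod{16}$ in positive density (this uses the linear disjointness of $\widetilde{K_1}$ and $\QQ(\zeta_{16})$ over $\QQ(\zeta_8)$, i.e.\ that the quadratic subfields of $\widetilde{K_1}$ are exactly $\QQ(i),\QQ(\sqrt2),\QQ(\sqrt{-2})$), and that for $p\equiv 1\pmod 8$ being \emph{congruent} is equivalent to $\rk E_p(\QQ)=2$, so that Assumption~\ref{A:goldfeld} applies verbatim. The remaining ingredients are the elementary divisibility computation of the first paragraph and the comparison of the criteria for $V(4)$ and $W(3)$ already exploited in the earlier corollaries.
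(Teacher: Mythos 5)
Your overall strategy is the same as the paper's: statement (b) holds if $V(4)$ is infinite; otherwise, since $V(4)$ and $W(3)$ are complementary inside $W(2)$ when restricted to $p\equiv 9\pmod{16}$ (the criteria of Theorems~\ref{T:V(4)} and~\ref{T:W(3)} differ exactly by the symbol $\tleg{\zeta}{\fp_p}$, which is $-1$ there), $W(3)$ contains a positive-density set of primes, and Assumption~\ref{A:goldfeld} is then invoked to discard rank-$2$ primes. Your Chebotarev verification that $W(2)\cap\{p\equiv 9\pmod{16}\}$ has positive density is correct (the paper takes this for granted), and your first-paragraph computation that a non-\emph{congruent} prime $p\in W(3)$ has $\ZZ/8\hookrightarrow\Sha(E_p)$ is fine.

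The gap is in your last paragraph: you assert that the parity conjecture is ``a theorem for elliptic curves over $\QQ$'' and use it to conclude that every \emph{congruent} prime $p\equiv 1\pmod 8$ has $\rk E_p(\QQ)=2$, so that Assumption~\ref{A:goldfeld} removes all \emph{congruent} primes. The parity conjecture for Mordell--Weil ranks is \emph{not} unconditional: what is known is the parity of the $2^\infty$-Selmer corank, and passing from that to the rank requires triviality of the divisible part of $\Sha(E_p)[2^\infty]$ --- the paper itself makes exactly this point in Section~\ref{S:implications} when citing \cite{dokchitser:BSDmod2}. As written, your argument does not exclude the possibility that the positive-density set you construct consists almost entirely of rank-$1$ (hence \emph{congruent}) primes, about which Assumption~\ref{A:goldfeld} says nothing. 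The repair is easy and is what the paper's terse proof implicitly relies on: you only need $\rk E_p(\QQ)\le 1$, not rank $0$. The $2$-descent shows that the classes of the $C_{p,i}$ span a $(\ZZ/2)^2$ in the $2$-Selmer group modulo the image of torsion, while the Mordell--Weil group contributes only $(\ZZ/2)^{\rk}$ more; so for rank $\le 1$ at least one $\xi_i$ is nonzero in $\Sha(E_p)[2]$, and since the definition of $W(3)$ does not depend on which $\xi_i$ is chosen (Cassels--Tate), that nonzero class is divisible by $4$ and yields $\ZZ/8\hookrightarrow\Sha(E_p)$. (Alternatively, the unconditional $2$-parity theorem for Selmer coranks shows that rank $1$ together with root number $+1$ forces $\Sha(E_p)[2^\infty]$ to have positive corank, which again contains $\ZZ/8$.) With this adjustment your proof agrees with the paper's.
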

\begin{proof}
The first statement follows if $W(3)$ contains a set of positive asymptotic density in the primes and the second follows if $V(4)$ is infinite. When restricted to primes $p\equiv 9\pmod{16}$, the two sets are complementary in $V(3)=W(2)$. Hence if $V(4)$ contains only finitely many primes congruent to $9$ modulo $16$, then $W(3)$ does contain a positive density set. The corollary follows.
\end{proof}

\section{Some related modular results}

Observations going back to Gauss (see \cite{dickson:histvolIII}*{Chapter~VI}) link
class numbers to coefficients of modular forms of weight $\frac{3}{2}$, in particular the cube of the classical \emph{Jacobi $\Theta$-series}. We write
\[ \sum_{n=0}^\infty r(n) q^n = \Theta(q)^3 = \left(\sum_{n=-\infty}^\infty q^{n^2}\right)^3.\]
The class number relation relevant for our problem is that for primes $p\equiv 1\pmod 4$ we have
\[h(-4p)=\frac{r(p)}{12}.\]
The other coefficients relate to class groups as well.
For any particular $p$ one can use this relation, or other methods, to compute $h(-4p)$ and hence decide for which $e$ one has $p\in V(e)$, at least in principle.

For the \emph{congruent number} problem, Tunnell \cite{tunnell:congnum} identified a specific modular form
\[\sum a_n q^n\in S_\frac{3}{2}(\tilde{\Gamma}_0(128))\]
such that for odd $n$ we have that $a_n\neq0$ implies that $n$ is not \emph{congruent}. He also gives another form for even $n$. His result relies on Waldspurger's work on the Shimura Correspondence and the part of the Birch--Swinnerton-Dyer conjecture (BSD) proved by Coates--Wiles. Tunnell also observes that the full BSD-conjecture implies that for non-\emph{congruent} primes $p$ we have
\[\#\Sha(E_p)=\tfrac{1}{4}a_p^2.\]
Rubin's work \cite{rubin:BSD-CM} imposes severe restrictions on the values that $\#\Sha(E_p)/a_p^2$ can take, but it does not provide any information on $\ord_2(\#\Sha(E_p)/a_p^2)$. Therefore, even though the analytic approach does provide means to prove that numbers are not \emph{congruent}, it requires unproven parts of BSD to provide any results for
Question~\ref{Q:congnum}.

It is also worthwhile to note that for neither question would analytic approaches be feasible to answer questions for primes in the range of Remark~\ref{R:1e200}. This is not too surprising, since the analytic approaches would find the integers $h(-4p)$ (unconditionally) and $\#\Sha(E_p)$ (conditionally), whereas Theorems~\ref{T:V(4)} and \ref{T:W(3)} only provide information on the valuation at $2$ of those integers.

\section{Preliminaries}
\label{S:notation}

When $K$ is a number field, we write $\calO_K$ for its ring of integers and $\calO_K^\times$ for its group of units. We write $\Cl(K)=\Cl(\calO_K)$ for its ideal class group. When $S$ is a finite set of places of $K$, we write $\calO_{K,S}$ for the ring of $S$-integers.

If $\fp\subset\calO_K$ is a prime ideal, we write $\tleg{.}{\fp}\colon \calO_K/\fp\to\{0,\pm 1\}$ for the associated quadratic character on the residue field, extended by setting $\tleg{0}{\fp}=0$.
When $\fp$ is a principal ideal generated by $\pi\in\calO_K$, we write $\tleg{.}{\pi}=\tleg{.}{\fp}$.
For an element $\alpha\in \calO_K$ we write $\tleg{\alpha}{\fp}$ for the quadratic character of the natural image of $\alpha$ in $\calO_K/\fp$.
When $\fp$ is completely split over a rational prime $p$, we denote $\tleg{\alpha}{\fp}=\tleg{\alpha}{p}$ if the value of the symbol is the same for all $\fp$ dividing $p\calO_K$. In this case the symbol can be computed by taking any element $\alpha'\in\FF_p$ that is a root of the minimal polynomial of $\alpha$ modulo $p$ and computing the Legendre symbol $\tleg{\alpha'}{p}$.

In what follows we need a variety of number fields. We fix notation and names for these fields. Let $p$ be a rational prime. We consider the following extensions.
\[\xymatrix{
&H_2=\QQ(\sqrt{1+i},\sqrt{p})\ar@{-}[dl]\ar@{-}[d]\ar@{-}[dr]\\
  K_1=\QQ(\sqrt{1+i})\ar@{-}[d]&
  H_1=\QQ(i,\sqrt{p})\ar@{-}[dl]\ar@{-}[d]\ar@{-}[dr]&
  L_1\ar@{-}[d]\\
K_0=\QQ(i)\ar@{-}[dr]&H_0=\QQ(\sqrt{-p})\ar@{-}[d]&L_0=\QQ(\sqrt{p})\ar@{-}[dl]\\
&\QQ
}\]
If $p\neq 2$ then $L_1$ can be described as the unique quartic subfield of $H_2$ that contains $\sqrt{p(1+i)}$. 

The choice $p=2$ plays a special role. We fix separate names $M_i$ for $H_i$ and $N_0$ for $L_0$. Note that $M_2$ is galois over $\QQ$ and that it contains two conjugate subfields isomorphic to $K_1$. We identify $K_1$ with one of them. We write $N_1$ for one of the non-normal quartic subfields containing $N_0$. 

We will conduct some involved computations in $M_2$ and its subfields. Some of these computations depend on the conjugates chosen. To avoid confusion,  we fix a generator $\beta$ for $M_2$, satisfying the relation
\[\beta^8 - 4\beta^7 + 12\beta^6 - 20\beta^5 + 24\beta^4 - 20\beta^3 + 12\beta^2 - 4\beta + 1=0.\]
We write
\begin{align*}
\beta'&:=\tfrac{1}{7}(\beta^7 + 2\beta^6 + 3\beta^5 + 5\beta^4 + 5\beta^3 + 3\beta^2 + 2\beta + 1)\\
\alpha&:=-9\beta' + 7\beta^6 - 9\beta^5 + 25\beta^4 - 14\beta^3 + 19\beta^2 - 4\beta + 2\\
\zeta&:=-11\beta' + 9\beta^6 - 12\beta^5 + 33\beta^4 - 18\beta^3 + 23\beta^2 - 5\beta +
    3\\
i&:=\zeta^2=\alpha^2-1\\
\epsilon&:=-8\beta' + 6\beta^6 - 7\beta^5 + 19\beta^4 - 7\beta^3 + 10\beta^2\\
\eta&:=\epsilon^3+\epsilon^2-\epsilon=\zeta\beta^2\\
\sqrt{2}&:=\epsilon^2-1,
\end{align*}
which fixes embeddings of $K_1=\QQ(\alpha)$ and $N_1=\QQ(\epsilon)$ into $M_2$. Note that $\Aut(M_2/\QQ)=D_4$, the dihedral group of order 8. We denote by $\sigma$ the involution of $M_2$ that leaves $N_1$ fixed and by $\tau$ the involution that leaves $K_1$ fixed. Then $\langle \sigma,\tau\rangle=\Aut(M_2/\QQ)$ and we write $\rho=(\sigma\tau)^2$, for the central involution of $\Aut(M_2/\QQ)$, which leaves $M_1$ fixed.
The unit groups of the rings of integers of these fields are $\calO_{K_1}^\times=\langle i,\alpha+1\rangle$, $\calO_{N_1}^\times=\langle -1,\epsilon, \eta\rangle$ and $\calO_{M_2}^\times=\langle \zeta,\alpha+1,\epsilon,\beta\rangle$.

We will find use for the following elementary lemmas which have undoubtedly been stated and proved many times, but for which we were unable to locate a reference.
\begin{lemma}\label{L:soroosh}
Let $p\equiv 1\pmod{8}$ be a prime and suppose that $x,y,D\in\ZZ$ with $p\nmid D$ and
\[x^2-Dy^2=p.\]
Then for $\alpha^2\equiv D\pmod{p}$ we have either $x+\alpha y\equiv 0\pmod{p}$ or
\[\legendre{\alpha(x+\alpha y)}{p}=1\]
\end{lemma}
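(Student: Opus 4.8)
The plan is to work with the quadratic symbol directly. Since $p\equiv 1\pmod 8$ and $p\nmid D$, the congruence $\alpha^2\equiv D\pmod p$ has two solutions $\pm\alpha$ in $\FF_p$, and the hypothesis $x^2-Dy^2=p$ gives, modulo $p$, that $(x-\alpha y)(x+\alpha y)=x^2-Dy^2\equiv 0\pmod p$. So one of the two factors $x\pm\alpha y$ vanishes mod $p$; after possibly replacing $\alpha$ by $-\alpha$ (which does not affect the truth of the claimed identity, since it simply swaps the roles of the two factors) I may assume $x+\alpha y\not\equiv 0\pmod p$ and must then show $\tleg{\alpha(x+\alpha y)}{p}=1$. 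Note $x-\alpha y\equiv 0$ forces $x\equiv\alpha y$, hence also $p\nmid y$ (else $p\mid x$ and then $p^2\mid x^2-Dy^2=p$, impossible) and $p\nmid x$.

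The key computation is to evaluate the product $(x+\alpha y)\cdot\alpha$ in $\FF_p$ up to squares. First I would observe $2\alpha y\equiv(x+\alpha y)-(x-\alpha y)\equiv x+\alpha y\pmod p$, so $\alpha(x+\alpha y)\equiv\alpha\cdot 2\alpha y=2Dy\pmod p$ — wait, more carefully: $x+\alpha y\equiv 2\alpha y$, hence $\alpha(x+\alpha y)\equiv 2\alpha^2 y\equiv 2Dy\pmod p$. Therefore
\[
\legendre{\alpha(x+\alpha y)}{p}=\legendre{2Dy}{p}=\legendre{2}{p}\legendre{D}{p}\legendre{y}{p}.
\]
Since $p\equiv 1\pmod 8$ we have $\tleg{2}{p}=1$, so the claim reduces to $\tleg{Dy}{p}=1$, equivalently $\tleg{D}{p}=\tleg{y}{p}$. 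But $\tleg{D}{p}=1$ because $D\equiv\alpha^2\pmod p$ is a square mod $p$, so it remains to show $\tleg{y}{p}=1$, i.e.\ that $y$ is a quadratic residue mod $p$.

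The remaining point — that $p\nmid y$ implies $\tleg{y}{p}=1$ — is the substantive one, and it should follow from $x^2-Dy^2=p$ together with $p\equiv 1\pmod 8$ by reducing modulo an odd prime divisor $\ell$ of $y$: then $x^2\equiv p\pmod\ell$, so $p$ is a square mod $\ell$, and by quadratic reciprocity (using $p\equiv 1\pmod 4$) $\ell$ is a square mod $p$; multiplying over all prime powers exactly dividing $y$, and handling the power of $2$ in $y$ via $\tleg{2}{p}=1$ and the sign of $y$ via $p\equiv 1\pmod 4$ (so $-1$ is a square mod $p$), gives $\tleg{y}{p}=1$. One must be mildly careful that $\ell\neq p$ (true since $p\nmid y$) and about $\ell=2$, but $\tleg 2 p=1$ covers that. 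I expect this last congruence bookkeeping — tracking signs and the factor of $2$ to conclude $\tleg y p=1$ — to be the only place where the hypothesis $p\equiv 1\pmod 8$ (rather than just $p\equiv 1\pmod 4$) is genuinely used, and it is the main, though still routine, obstacle.
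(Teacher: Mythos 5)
Your proposal is correct and follows essentially the same route as the paper: reduce to the case $x\equiv\alpha y\pmod p$, use the identity $x+\alpha y\equiv 2\alpha y$ to reduce the claim to $\tleg{2}{p}=\tleg{y}{p}=1$, and then establish $\tleg{y}{p}=1$ by applying quadratic reciprocity to each prime divisor of $y$ (using $p\equiv 1\pmod 8$ to handle $2$ and the sign). The only cosmetic difference is that the paper writes the square as $2\alpha y(x+\alpha y)\equiv(2x)^2$ while you compute $\alpha(x+\alpha y)\equiv 2Dy$; these are equivalent reductions.
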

\begin{proof}
We thank Soroosh Yazdani for pointing out the following proof.
Note that $x^2-Dy^2\equiv(x+\alpha y) (x-\alpha y)\equiv 0\pmod{p}$, so either $x+\alpha y\equiv 0\pmod{p}$ or $x-\alpha y\equiv 0\pmod{p}$. The lemma holds in the first case, so we assume the latter. Then
\[x(x+\alpha y)\equiv \alpha y(x+\alpha y)\pmod{p}\quad\text{ and }\quad2x\equiv (x+\alpha y)\pmod{p}.\]
It follows that
\[
\legendre{2\alpha y(x+\alpha y)}{p}=1.
\]
We are left with establishing that $\tleg{y}{p}=\tleg{2}{p}=1$. For any prime $q$ dividing $y$ we have $x^2\equiv p\pmod{q}$, so $\tleg{p}{q}=1$. Since $p\equiv 1 \pmod{8}$, quadratic reciprocity gives us $\tleg{q}{p}=1$ and $\tleg{-1}{p}=1$, so $y$ is a product of squares modulo $p$ and therefore a square modulo $p$ itself.
\end{proof}

\begin{lemma}\label{L:soroosh Z[i]}
Let $\pi \in \ZZ[i]$ be a prime element satisfying $\pi\equiv 1\pmod{2\ZZ[i]}$, $\Norm_{\ZZ[i]/\ZZ}(\pi)\equiv 1\pmod 8$ and $\tleg{1+i}{\pi}=1$. Suppose that $x,y,D\in\ZZ[i]$ with $\pi\nmid D$ and $x^2-Dy^2=\pi$. Then for $\alpha^2\equiv D\pmod{p}$ we have either $x+\alpha y\equiv 0\pmod{\pi}$ or
\[\legendre{\alpha(x+\alpha y)}{\pi}=1.\]
\end{lemma}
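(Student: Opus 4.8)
The plan is to reduce Lemma~\ref{L:soroosh Z[i]} to the same elementary manipulation that proved Lemma~\ref{L:soroosh}, working in the residue field $\FF_\pi:=\ZZ[i]/\pi$, which by hypothesis has $\#\FF_\pi=\Norm(\pi)\equiv 1\pmod 8$. As before, from $x^2-Dy^2=\pi$ we get $(x+\alpha y)(x-\alpha y)\equiv 0\pmod\pi$ in the field $\FF_\pi$, so either $x+\alpha y\equiv 0\pmod\pi$ (and we are done) or $x\equiv \alpha y\pmod\pi$. In the latter case the identities $x(x+\alpha y)\equiv \alpha y(x+\alpha y)$ and $2x\equiv x+\alpha y\pmod\pi$ go through verbatim, yielding
\[
\legendre{2\alpha y(x+\alpha y)}{\pi}=1,
\]
so it remains to show that $\tleg{2}{\pi}=1$ and $\tleg{y}{\pi}=1$.

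For the factor $2$: since $2=-i(1+i)^2$, we have $\tleg{2}{\pi}=\tleg{-i}{\pi}=\tleg{-1}{\pi}\tleg{i}{\pi}$. Because $\Norm(\pi)\equiv 1\pmod 8$, the field $\FF_\pi$ contains a primitive $8$th root of unity, so $2$ (equivalently $i$, and $-1$) is a square mod $\pi$; concretely $\tleg{-1}{\pi}=1$ as $\#\FF_\pi\equiv 1\pmod 4$, and $\tleg{i}{\pi}=1$ as $\#\FF_\pi\equiv 1\pmod 8$. Hence $\tleg{2}{\pi}=1$. This is the point where the hypotheses $\pi\equiv 1\pmod{2\ZZ[i]}$ and $\Norm(\pi)\equiv 1\pmod 8$ are used; the extra hypothesis $\tleg{1+i}{\pi}=1$ I expect to enter only through the $y$-factor argument below, keeping the statement parallel to how it gets applied in Theorems~\ref{T:V(4)} and~\ref{T:W(3)}.

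For the factor $y$: I would factor $y=\prod \lambda_j$ into Gaussian primes (up to a unit, which is a square mod $\pi$ by the previous paragraph) and show $\tleg{\lambda_j}{\pi}=1$ for each $j$. For a prime $\lambda\mid y$ we have $x^2\equiv \pi\pmod\lambda$, so $\tleg{\pi}{\lambda}=1$, and the task is to deduce $\tleg{\lambda}{\pi}=1$. The natural tool is quadratic reciprocity in $\ZZ[i]$ (for the quartic-residue-based reciprocity of Gaussian integers, or simply the biquadratic reciprocity law restricted to quadratic symbols): for primes $\lambda,\pi\equiv 1\pmod{(1+i)^3}$ one has $\tleg{\lambda}{\pi}=\tleg{\pi}{\lambda}$. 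One must normalize $\lambda$ and handle the prime $1+i$ dividing $y$ separately — and this is exactly where $\tleg{1+i}{\pi}=1$ is needed, since a factor of $1+i$ in $y$ contributes $\tleg{1+i}{\pi}$, assumed to be $1$. I would also need to dispose of the case where $\lambda$ is a rational prime $q\equiv 3\pmod 4$ inert in $\ZZ[i]$: then $\FF_\lambda=\FF_{q^2}$, $\tleg{\pi}{\lambda}=1$ means $\pi$ is a square in $\FF_{q^2}$, and one argues via the norm that $\tleg{\lambda}{\pi}=\tleg{q}{\pi}=\tleg{q}{\pi}$ is then forced to be $1$ using $\Norm(\pi)\equiv 1\pmod 8$ again.

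The main obstacle is getting the reciprocity bookkeeping for $\ZZ[i]$ exactly right: normalizing each Gaussian prime factor of $y$ to be primary ($\equiv 1\pmod{(1+i)^3}$), tracking the unit and $(1+i)$-contributions, and confirming that the only leftover symbol is the hypothesized $\tleg{1+i}{\pi}=1$. Once that is done, multiplying everything together gives $\tleg{y}{\pi}=1$, hence $\tleg{\alpha(x+\alpha y)}{\pi}=\tleg{2\alpha y(x+\alpha y)}{\pi}\cdot\tleg{2}{\pi}^{-1}\cdot\tleg{y}{\pi}^{-1}=1$, completing the proof. I would present the $\ZZ[i]$-reciprocity input as a black box citing a standard reference (e.g., Lemmermeyer) rather than reproving it, mirroring the terse style of Lemma~\ref{L:soroosh}.
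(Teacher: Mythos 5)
Your proposal is correct and takes essentially the same route as the paper's own proof: reduce to the residue-field computation of Lemma~\ref{L:soroosh}, then establish $\tleg{2}{\pi}=\tleg{y}{\pi}=1$ by factoring $y$ as a unit times a power of $(1+i)$ times normalized Gaussian primes and invoking quadratic reciprocity in $\ZZ[i]$ (cited from Lemmermeyer), with the unit, $2$-, and $(1+i)$-contributions absorbed by $\Norm(\pi)\equiv 1\pmod 8$ and the hypothesis $\tleg{1+i}{\pi}=1$. The paper's proof is exactly this argument, stated even more tersely.
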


\begin{proof}
Note that quadratic reciprocity for $\ZZ[i]$ (established by Gauss and Dirichlet \cite{lemmermeyer:reciprocity}*{Proposition~5.1}) says that if $\pi,\lambda\in\ZZ[i]$ are distinct prime elements satisfying $\pi,\lambda\equiv 1\pmod{2\ZZ[i]}$ then
$\tleg{\lambda}{\pi}=\tleg{\pi}{\lambda}$. We can write
$y=i^a(1+i)^b\lambda_1^{e_1}\cdots\lambda_r^{e_2}$, where $\lambda_1,\ldots,\lambda_r\in\ZZ[i]$ are prime elements satisfying $\lambda_j\equiv 1\pmod{2\ZZ[i]}$ (we can ensure this by multiplying by $i$ is necessary).
It follows that $\tleg{\lambda_j}{\pi}=\tleg{\pi}{\lambda_j}$. The conditions in the lemma ensure that $\tleg{i}{\pi}=\tleg{1+i}{\pi}=1$.
This establishes the required ingredients to complete the proof in the same way as for
Lemma~\ref{L:soroosh}.
\end{proof}

\section{Class groups as local-global obstructions}

There are various ways to prove \eqref{E:V(2)} and \eqref{E:V(3)}. The proofs we give here are based on norm-form equations and are in the spirit of Gauss's treatment of genus theory. The lack of reference should not be construed as a claim to priority, but rather as evidence that it is hard to find a reference for such elementary facts.  These methods  have the great benefit that the techniques readily apply over extensions of the base ring as well. Doing so appears to provide a novel ingredient and allows us to prove something about $V(4)$. First we introduce some terminology and an elementary lemma that links solutions to norm form equations to divisibility in class groups.

Let $R$ be a principal ideal domain of characteristic different from $2$ and let $k$ be its field of fractions. Suppose $d\in R$ is a non-square. Let $L=k(\sqrt{d})$ and let $\calO_L\subset L$ be the integral closure of $R$ in $L$. We write $\Cl(\calO_L)$ for the ideal class group of $\calO_L$.

\begin{definition}
We say that a pair $(x,y)\in k\times k$ is \emph{$\sqrt{d}$-primitive} if the principal fractional ideal $(x+y\sqrt{d})\calO_L$ is integral and is not contained in the extension of any proper ideal from $R$ to $\calO_L$, i.e. for all $a\in R$ we have that 
\[(x+y\sqrt{d})\calO_L \subset a\calO_L \text{ if and only if }a\text{ is a unit in }R.\]
\end{definition}

The definition ensures that for a $\sqrt{d}$-primitive pair $(x,y)$, the principal ideal $(x+y\sqrt{d})\calO_L$ is not divisible by any prime ideals that are inert for $\calO_L/R$ and that if a split prime $\fq$ divides $(x+y\sqrt{d})\calO_L$ then the conjugate prime does not. This means that we can read off the exponents in the ideal factorization of $(x+y\sqrt{d})\calO_L$ from its norm $(x^2-dy^2)R$. Since $R$ is a principal ideal domain, this corresponds to the factorisation of $x^2-dy^2$ as an element of $R$.

\begin{remark}
If $\{1,\sqrt{d}\}$ forms an $R$-basis of $\calO_L$ then a pair $(x,y)$ is
$\sqrt{d}$-primitive if and only if $x,y\in R$ and $xR+yR=R$. In particular, if $R=\ZZ$ and $d$ is squarefree and $d \equiv 3 \pmod 4$ then $(x,y)$ is
$\sqrt{d}$-primitive if and only if $x,y\in\ZZ$ with $\gcd(x,y)=1$, which is the usual meaning of \emph{primitive}.
\end{remark}

\begin{lemma}\label{L:cglemma} Let $R$ be a principal ideal domain with field of fractions $k$. Let $L=k(\sqrt{d})$ be a quadratic extension of $k$ and let $\calO_L$ be the integral closure of $R$ in $L$.
Let $\fp\subset \calO_L$ be a prime ideal with norm $pR$. We have
\[[\fp]\in n\Cl(\calO_L)\]
if and only if there is a unit $u\in R^\times$ such that the equation
\[x^2-dy^2=up\,z^n\]
has a solution $x,y,z\in k$ with $(x,y)$ a $\sqrt{d}$-primitive pair.
\end{lemma}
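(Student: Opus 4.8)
The plan is to pass between principal ideals of the shape $(x+y\sqrt d)\calO_L$ and divisibility statements in $\Cl(\calO_L)$ via the relative ideal norm $\Norm_{L/k}$, using that it is multiplicative on fractional ideals, that $\Norm_{L/k}((\gamma)\calO_L)=(\Norm_{L/k}(\gamma))R$ for $\gamma\in L^\times$, and that $\Norm_{L/k}(x+y\sqrt d)=x^2-dy^2$. Write $pR=\fp\cap R$. Since $\fp$ has norm $pR$ it is split or ramified in $\calO_L/R$, so $p\calO_L$ equals $\fp\overline{\fp}$ or $\fp^2$; in the split case $[\fp]+[\overline{\fp}]=[(p)\calO_L]=0$ in $\Cl(\calO_L)$. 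I will also use repeatedly that the extension $\mathfrak{l}\calO_L=(\ell)\calO_L$ of a prime of $R$ is a principal ideal, and that only finitely many primes of $R$ ramify in $\calO_L$.

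For the ``if'' direction I would start from a $\sqrt d$-primitive solution $(x,y,z)$ of $x^2-dy^2=upz^n$ and put $I=(x+y\sqrt d)\calO_L$, an integral ideal with $\Norm_{L/k}(I)=(pz^n)R$. As already observed after the definition of $\sqrt d$-primitivity, $I$ is then divisible by no inert prime of $\calO_L$ and by at most one prime from each conjugate split pair, so the prime factorization of $I$ can be read off from $\Norm_{L/k}(I)$: for each prime $\mathfrak{l}\mid (pz^n)R$ there is a distinguished prime $\mathfrak{L}_{\mathfrak{l}}$ of $\calO_L$ above it whose exponent in $I$ is $v_{\mathfrak{l}}((pz^n)R)=v_{\mathfrak{l}}(pR)+n\,v_{\mathfrak{l}}(zR)$. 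This displays $I=\mathfrak{L}_{pR}\cdot J^n$ for a suitable ideal $J$ and a prime $\mathfrak{L}_{pR}$ above $pR$, so $[\mathfrak{L}_{pR}]=-n[J]\in n\Cl(\calO_L)$; since $[\mathfrak{L}_{pR}]\in\{[\fp],-[\fp]\}$ and $n\Cl(\calO_L)$ is a subgroup, this gives $[\fp]\in n\Cl(\calO_L)$.

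For the converse I would use that $n\Cl(\calO_L)$ is a subgroup to pick an \emph{integral} ideal $\mathfrak{b}$ with $[\fp]=-n[\mathfrak{b}]$, so that $\fp\mathfrak{b}^n=(\theta)\calO_L$ is principal with $\theta\in\calO_L$; writing $\theta=x+y\sqrt d$ with $x,y\in k$ and taking norms yields $(x^2-dy^2)R=\Norm_{L/k}(\fp)\Norm_{L/k}(\mathfrak{b})^n=(pz^n)R$, where $\Norm_{L/k}(\mathfrak{b})=zR$ with $z\in R$, hence $x^2-dy^2=upz^n$ for a unit $u\in R^\times$. The real work is to choose $\mathfrak{b}$ inside its ideal class so that $\fp\mathfrak{b}^n$ is actually $\sqrt d$-primitive; analysing when $\mathfrak{l}\calO_L$ divides $\fp\mathfrak{b}^n$ shows it is enough that $\mathfrak{b}$ be coprime to $p\calO_L$ and to every ramified prime, be divisible by no inert prime, and be divisible by at most one prime from each conjugate split pair. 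I would do this in three moves: (i) choose a representative coprime to $p\calO_L$ and to the finitely many ramified primes; (ii) divide out the powers of inert primes, which are principal, affecting neither the class nor the coprimality already achieved; (iii) for each conjugate split pair both of whose members still divide the ideal, divide out the appropriate power of $\mathfrak{L}\overline{\mathfrak{L}}=(\ell)\calO_L$, which again preserves the class and the coprimality and strictly reduces the number of offending pairs. As only finitely many primes are involved the process terminates, and $(x,y)$ is then $\sqrt d$-primitive because $(\theta)\calO_L=\fp\mathfrak{b}^n$ is integral and divisible by no $\mathfrak{l}\calO_L$.

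I expect step (iii), and more generally the task of making $\fp\mathfrak{b}^n$ genuinely $\sqrt d$-primitive while keeping it principal — above all preventing a ramified prime from contributing an unwanted whole factor $\mathfrak{l}\calO_L$ — to be the main obstacle; once a suitable $\mathfrak{b}$ is in hand the norm computation and the identification of $[\fp]$ are routine. A minor loose end to tie off is that $z$ may be taken in $R$: the hypothesis only places $z$ in $k$, but the exponents $v_{\mathfrak{l}}((pz^n)R)$ that arise are nonnegative, which forces $v_{\mathfrak{l}}(zR)\geq0$.
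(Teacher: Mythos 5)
Your proof is correct and takes essentially the same route as the paper's: the forward direction factors the principal ideal $(x+y\sqrt d)\calO_L$ and uses primitivity to read the exponents off its norm, and the converse picks an integral representative $\mathfrak{b}$ of the relevant class that is coprime to $p$ and the ramified primes, contains no inert primes, and contains at most one prime from each conjugate split pair, so that $\fp\mathfrak{b}^n$ is principal with a $\sqrt d$-primitive generator. Your explicit treatment of the ambiguity $[\mathfrak{L}_{pR}]\in\{[\fp],-[\fp]\}$ and of the ramified primes is marginally more careful than the paper's, but the argument is the same.
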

\begin{proof} First suppose we have a solution with $(x,y)$ a $\sqrt{d}$-primitive pair. We denote the ideal factorisation of the principal ideal generated by $x+y\sqrt{d}$ by
\begin{equation}\label{E:ideal factorization}
(x+y\sqrt{d})\calO_L=\fp^{e_0}\prod_{i=1}^r \fq_i^{e_i}.
\end{equation}
The primitivity condition guarantees that none of the $\fp$ and $\fq_i$ are extensions of ideals in $R$, so each is either split or ramified. That means that $\Norm(\fq_i)=q_iR$ for some prime element $q_i$. Furthermore, note that if $\fq_i$ and $\fq_j$ have the same norm, then $\fq_i\fq_j=q_i\calO_L$, which would contradict the primitivity of $x,y$.

Taking norms of both sides of \eqref{E:ideal factorization} we obtain for some unit $u\in R^\times$ that
\[x^2-dy^2= up^{e_0}\prod_{i=1}^r q_i^{e_i}= upz^n.\]
Unique factorization gives that $e_0\equiv 1\pmod{n}$ and that $e_i\equiv 0\pmod n$ for $i=1,\ldots,r$. When we use that the left hand side of $\eqref{E:ideal factorization}$ is a principal ideal, we get the following identity in $\Cl(\calO_L)$:
\[0=[\fp^{e_0}\fq_1^{e_1}\cdots\fq_r^{e_r}]=[\fp]+[\fp^{e_0-1}\fq_1^{e_1}\cdots \fq_r^{e_r}]=[\fp]+n[\fa],\]
where $\fa=\fp^{(e_0-1)/n}\fq_1^{e_1/n}\cdots\fq_r^{e_r/n}$. This establishes one direction of the proof.

For the converse, let $\fa\subset \calO_L$ be an ideal such that $[\fp]=-n[\fa]$. This uses that $\calO_L$ is a Dedekind domain, so all ideal classes are represented by integral ideals. In fact, we can represent all ideal classes while avoiding a finite set of primes, so we can assume that $\Norm_{L/k}(\fa)$ is not divisible by $p$.
Note that inert ideals are principal and that conjugate primes represent inverse classes, so without loss of generality we have
$\fa=\fq_1^{e_1}\cdots \fq_r^{e_r}$, where the $\fq_i$ are split or ramified and have distinct norms. Then $\fp\fa^n$ is principal, so $\fp\fa^n=(x+y\sqrt{d})\calO_L$, where our assumptions on $\fa$ ensure that $(x,y)$ is a primitive pair. By picking $z\in R$ such that $zR=N(\fa)$, we obtain a solution as desired.
\end{proof}

\begin{proof}[Proof of \eqref{E:V(2)}]
We apply Lemma~\ref{L:cglemma} with $k=\QQ$, $L=H_0=\QQ(\sqrt{-p})$ and $\fp=\ft$ the ramified prime ideal of $\calO_L$ over $2$. We have already established that the class of $\ft$ has order $2$. We obtain that $p\in V(2)$ if and only if the equation
\begin{equation}\label{E:V(2) conic}
x^2+p y^2=2z^2
\end{equation}
has a solution such that $(x,y)$ is $(\sqrt{-p})$-primitive. A priori, we also need to consider the equation $x^2+p y^2=-2z^2$ but that obviously does not have primitive solutions.

Since Equation~\eqref{E:V(2) conic} is homogeneous, any non-zero solution is proportional to a $\sqrt{-p}$-primitive solution. Furthermore, the Hasse-Minkowski theorem guarantees that this equation has a solution if and only if it has solutions everywhere locally.

For solvability at $p$, one needs that $2$ is a square modulo $p$ and for solvability at $2$ one need that $p$ is a square modulo $4$. These conditions are met if and only if $p\equiv1 \pmod{8}$.
\end{proof}

\begin{proof}[Proof of \eqref{E:V(3)}]
Lemma~\ref{L:cglemma} gives $p\in V(3)$ if and only if there are $x,y,z\in\ZZ$ with $\gcd(x,y,z)=1$ such that
\[x^2+p y^2=2z^4.\]
We observe that this implies that $x,y$ are both odd and rewrite this to
\[-py^2=(x-\sqrt{2}z^2)(x+\sqrt{2}z^2).\]
Let $N_0=\QQ(\sqrt{2})$. We write $\tau$ for conjugation of $N_0/\QQ$, so for $\alpha=u+v\sqrt{2}$, we have $\act{\tau}{\alpha}=u-v\sqrt{2}$.

Since obviously $V(3)\subset V(2)$, we can assume that $p\equiv 1\pmod{8}$ by \eqref{E:V(2)}. Hence $p$ is split in $N_0$. Furthermore, since $\calO_{N_0}$ is a principal ideal domain and the fundamental unit $\epsilon=1+\sqrt{2}$ has norm $-1$, we have an element $\pi\in \calO_{N_0}$ such that $\pi\,\act{\tau}{\pi}=-p$. Primitivity implies that there is a $\gamma\in\calO_{N_0}$ with $\gamma\notin\sqrt{2}\calO_{N_0}$ such that
\[\begin{cases}
(x-z^2\sqrt{2})=\pm\pi\gamma^2\\
(x+z^2\sqrt{2})=\pm\act{\tau}{\pi}\,\act{\tau}\gamma^2.\\
\end{cases}\]
From this equation we derive that
\begin{equation}\label{E:V3conic}
\pm2\sqrt{2}z^2=\act{\tau}{\pi}\act{\tau}\gamma^2-\pi\gamma^2,
\end{equation}
Local solvability at $2$ forces the sign choice. Local solvability at $\pi\calO_L$ implies that
\begin{equation}
\legendre{\sqrt{2}\,\act{\tau}{\pi}}{\pi\calO_L}=1.
\end{equation}
Conversely, note that if we write $\gamma=s+t\sqrt{2}$ and collect coefficients with respect to $\sqrt{2}$ in \eqref{E:V3conic} then we get a conic in $s,t,z$
with solutions everywhere locally and hence globally. With some further standard calculations we can also check that we can find a point satisfying the appropriate primitivity conditions.

In order to simplify the symbol above, note that Lemma~\ref{L:soroosh} implies that
$\tleg{\sqrt{2}(1+\sqrt{2})\act{\tau}{\pi}}{\pi}=1$.
Furthermore, with the right choice of conjugates, one has $(1+\sqrt{2})(1+i)=(\zeta^3-1)^2$. Together this yields
\begin{equation}\label{E:1+sqrt2 vs 1+i}
\legendre{\sqrt{2}\,\act{\tau}{\pi}}{\pi\calO_L}=\legendre{1+\sqrt{2}}{p}=\legendre{1+i}{p},
\end{equation}
where the fact that $p\equiv 1\pmod{8}$ guarantees that the symbol is independent of choice of conjugate.
\end{proof}

Note that in the above two arguments, we obtained a criterion for $p\in V(e)$ for $e=2,3$ by reducing the condition in Lemma~\ref{L:cglemma} to the existence of a rational point on some conic, which is entirely determined by local conditions. We can handle the two cases above with $p$ as a parameter because the extensions involved in deriving the relevant conics are independent of $p$. For higher $e$ this does not seem to be the case anymore and this approach does not seem to have much benefit over computing $\Cl(\QQ(\sqrt{-p}))$ directly.

The following corollary to a classical result by Dirichlet (1842) allows us to link the class groups of $H_0=\QQ(\sqrt{-p})$ and $H_1=\QQ(\sqrt{p},i)$. We can then consider $H_1$ as a quadratic extension of $K_0=\QQ(i)$, whose ring of integers is a principal ideal domain. This allows us to apply Lemma~\ref{L:cglemma}, to obtain an alternative proof for \eqref{E:V(3)} and derive a new criterion for $p\in V(4)$.

\begin{proposition}[\cite{cohn:classinvit}*{Corollary 19.8c}]\label{P:cohn corr}
Let $p>0$ be a prime, let $h'=\#\Cl(\QQ(\sqrt{p}))$, let $h_0=\#\Cl(\QQ(\sqrt{-p}))$ and let $h_1=\#\Cl(\QQ(i,\sqrt{-p}))$. Then
\[
h_1=\begin{cases}
\frac{1}{2}h_0h'&\text{ if }p\equiv 1\pmod 4\\
h_0h'&\text{ if }p\equiv 3\pmod 4\text{ or }p=2
  \end{cases}
\]
\end{proposition}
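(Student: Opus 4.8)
The plan is to follow Dirichlet's analytic route. First observe that $H_1=\QQ(i,\sqrt{-p})=\QQ(i,\sqrt p)$ is the biquadratic $V_4$-extension of $\QQ$ whose three quadratic subfields are $K_0=\QQ(i)$, $L_0=\QQ(\sqrt p)$ and $H_0=\QQ(\sqrt{-p})$. The three nontrivial characters of $\Gal(H_1/\QQ)$ are exactly the quadratic characters $\chi_{K_0},\chi_{L_0},\chi_{H_0}$ cutting out these subfields, so Artin formalism gives
\[
\zeta_{H_1}(s)=\zeta(s)\,L(s,\chi_{K_0})\,L(s,\chi_{L_0})\,L(s,\chi_{H_0}),
\]
together with $\zeta_{K_0}(s)=\zeta(s)L(s,\chi_{K_0})$ and likewise for $L_0$ and $H_0$. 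The strategy is to compare residues at $s=1$ in all of these identities.

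The residue of $\zeta$ at $s=1$ is $1$, so comparing residues in $\zeta_{K_0}(s)=\zeta(s)L(s,\chi_{K_0})$, etc., and invoking the analytic class number formula for $\zeta_{K_0},\zeta_{L_0},\zeta_{H_0}$ yields $L(1,\chi_{K_0})=\pi/4$, $L(1,\chi_{L_0})=2h'\log\varepsilon/\sqrt{d_{L_0}}$ with $\varepsilon$ the fundamental unit of $L_0$, and $L(1,\chi_{H_0})=2\pi h_0/(w_{H_0}\sqrt{|d_{H_0}|})$, where $w_{H_0}$ is the number of roots of unity of $H_0$ and $d_K$ denotes the discriminant of a field $K$. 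Substituting these into the factorization of $\zeta_{H_1}$ alongside its own residue (here $H_1$ has $r_1=0$, $r_2=2$), and using the conductor--discriminant identity $|d_{H_1}|=|d_{K_0}|\,|d_{L_0}|\,|d_{H_0}|$ (the signs are consistent because $L_0$ is real while $K_0,H_0$ are imaginary), every power of $\pi$ and every square root of a discriminant cancels, leaving
\[
\frac{2\,h_1\,R_{H_1}}{w_{H_1}}=\frac{h'\,h_0\,\log\varepsilon}{w_{H_0}},
\]
where $R_{H_1}$ is the regulator of $H_1$. Since $H_1$ has unit rank $1$ and $\varepsilon\in\calO_{L_0}^\times\subset\calO_{H_1}^\times$, setting $Q=[\calO_{H_1}^\times:\mu_{H_1}\langle\varepsilon\rangle]$ gives $R_{H_1}=2\log\varepsilon/Q$, the factor $2$ coming from the complex place, and therefore
\[
h_1=\frac{Q\,w_{H_1}}{4\,w_{H_0}}\,h'\,h_0.
\]
As $w_{H_1}=2w_{H_0}$ for every odd prime $p$ and $w_{H_1}=4w_{H_0}$ for $p=2$ (where $H_1=\QQ(\zeta_8)$), the proposition reduces to the statement that $Q=1$ when $p\equiv 1\pmod 4$ or $p=2$, and $Q=2$ when $p\equiv 3\pmod 4$.

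This unit-index computation is the heart of the matter, and the step I expect to be the main obstacle. Because $H_1$ is CM with maximal totally real subfield $L_0$ and $\mu_{H_1}\langle\varepsilon\rangle=\mu_{H_1}\calO_{L_0}^\times$, the classical bound for CM fields gives $Q\in\{1,2\}$; it remains to decide which. For $p\equiv 3\pmod 4$ one has $\Norm_{L_0/\QQ}(\varepsilon)=+1$, so writing $\varepsilon=t+u\sqrt p$ with $t^2-pu^2=1$ and factoring $(t-1)(t+1)=pu^2$ one finds $2\varepsilon=(\sqrt{t+1}+\sqrt{t-1})^2$ with $\sqrt{t+1}\,\sqrt{t-1}=u\sqrt p\in\calO_{L_0}$, so $2\varepsilon$ is a square in $L_0$; hence $i\varepsilon\equiv 2i\equiv(1+i)^2$ modulo squares in $H_1$, so $\sqrt{i\varepsilon}$ is a unit outside $\mu_{H_1}\langle\varepsilon\rangle$ and $Q=2$. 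When $\Norm_{L_0/\QQ}(\varepsilon)=-1$ --- which covers $p\equiv 1\pmod 4$ and $p=2$ --- I would argue by contradiction: a hypothetical unit $v=a+bi\in H_1=L_0\oplus L_0 i$ with $v^2=\omega\varepsilon$ for some root of unity $\omega\in\mu_{H_1}$ forces, after separating the $1$- and $i$-components, one of $\pm\varepsilon$ or $\pm 2\varepsilon$ to be a square in $L_0$; the former is impossible by positivity at a suitable real place together with $\varepsilon$ being fundamental, and the latter because $\Norm_{L_0/\QQ}$ trivialises squares whereas $\Norm_{L_0/\QQ}(\varepsilon)=-1$, so $Q=1$. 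For $p=2$ one must additionally rule out the primitive eighth roots of unity, by a similar short computation. Together with the displayed formula this proves the proposition.
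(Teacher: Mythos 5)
The paper does not prove this proposition at all: it is imported verbatim from Cohn (Corollary 19.8c), where it is attributed to Dirichlet (1842), so there is no in-paper argument to compare against. What you have written is essentially the classical proof, and it is correct in outline: factor $\zeta_{H_1}$ over the three quadratic subfields via Artin formalism, compare residues at $s=1$ using the analytic class number formula and the conductor--discriminant formula, and reduce everything to the Hasse unit index $Q=[\calO_{H_1}^\times:\mu_{H_1}\calO_{L_0}^\times]\in\{1,2\}$ together with the ratio $w_{H_1}/w_{H_0}$ (which you correctly note is $2$ for all odd $p$, including $p=3$ where both groups pick up the extra factor $3$, and $4$ for $p=2$). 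The determination of $Q$ via $\Norm_{L_0/\QQ}(\varepsilon)$ is also the right mechanism, and your reductions of $v^2=\omega\varepsilon$ to ``$\pm\varepsilon$ or $\pm2\varepsilon$ is a square in $L_0$'' and the norm/positivity contradictions are fine.

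Two steps need repair. First, in the case $p\equiv3\pmod4$ your justification that $2\varepsilon$ is a square in $L_0$ does not follow from the displayed identity: knowing that $\sqrt{t+1}\,\sqrt{t-1}=u\sqrt{p}\in\calO_{L_0}$ does not put the \emph{sum} $\sqrt{t+1}+\sqrt{t-1}$ in $L_0$. You need the standard descent on $(t-1)(t+1)=pu^2$: since $\gcd(t-1,t+1)\mid 2$, either $\{t+1,t-1\}=\{a^2,pb^2\}$ (giving $2\varepsilon=(a+b\sqrt p)^2$), or $\{t+1,t-1\}=\{2a^2,2pb^2\}$, which forces $\varepsilon=(a+b\sqrt p)^2$ and contradicts fundamentality. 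Second, the assertion $Q=1$ for $p=2$ is not covered by your norm argument: for $H_1=\QQ(\zeta_8)$ one must exclude $v^2=\zeta_8\varepsilon$ with $\varepsilon=1+\sqrt2$, and here $\Norm_{H_1/\QQ}(\zeta_8\varepsilon)=1$ is a square, so no contradiction arises from norms; this single case needs a genuine separate argument (e.g.\ the known unit group $\calO_{\QQ(\zeta_8)}^\times=\langle\zeta_8\rangle\times\langle 1+\sqrt2\rangle$, or the standard proof that $Q=1$ for prime-power cyclotomic fields). Finally, you silently use that the fundamental unit of $\QQ(\sqrt p)$ has norm $-1$ for every prime $p\equiv1\pmod4$; this is true but is itself a nontrivial classical theorem (via the oddness of the narrow class number from genus theory) and should be cited as an input.
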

For a prime satisfying $p\equiv 1\pmod{8}$ we have that $h'$ is odd by Gauss's genus theory.
Note that $2$ ramifies in $K_0=\QQ(i)$ and splits in $L_0=\QQ(\sqrt{p})$. That means that $H_1$ has two primes $\ft_1,\ft_2$ over $2$, each of ramification index $2$. Furthermore, since $h'$ is odd, we see that $[\ft_1^2]$ and $[\ft_2^2]$ have odd order in the class group.

Extension of ideals from $\calO_{H_0}$ to $\calO_{H_1}$ gives a homomorphism
\[\Cl(H_0)[2^\infty]\to \Cl(H_1)[2^\infty]\]
and it is easy to check that the kernel is of order $2$. In view of Proposition~\ref{P:cohn corr} this means that the map is surjective and thence that $\Cl(H_1)[2^\infty]$ is cyclic. The last fact also follows from applying genus theory to the relative extension $H_1/L_0$.

\begin{lemma}\label{L:cg iso criterion}
Let $p\equiv 1\pmod 8$ be a rational prime and let $e\geq 2$. We have $p\in V(e)$ if and only if the equation
\begin{equation}\label{E:K0conic}
x^2+p\,y^2=(1+i)z^{2^{e-2}}
 \end{equation}
has a solution $x,y,z\in\QQ(i)$ with
\[(x-i\,y)\ZZ[i]+2\,y\ZZ[i]=\ZZ[i].\]
\end{lemma}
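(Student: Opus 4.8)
The strategy is to apply Lemma~\ref{L:cglemma} over the principal ideal domain $R=\ZZ[i]$. Take $k=K_0=\QQ(i)$ and $L=H_1=K_0(\sqrt{-p})=\QQ(i,\sqrt p)$, so that $\calO_L=\calO_{H_1}$. Since $p\equiv 1\pmod 8$ is a square in $\QQ_2$, it is a square in $\QQ_2(i)$, so $(1+i)$ is unramified and split in $H_1/K_0$, say $(1+i)\calO_{H_1}=\ft_1\ft_2$; put $\fp=\ft_1$, which has norm $(1+i)\ZZ[i]$ over $R$. With $n=2^{e-2}$, a positive integer because $e\geq 2$, Lemma~\ref{L:cglemma} yields that $[\ft_1]\in 2^{e-2}\Cl(\calO_{H_1})$ if and only if $x^2+p\,y^2=u(1+i)z^{2^{e-2}}$ admits a $\sqrt{-p}$-primitive solution for some unit $u\in\ZZ[i]^\times$. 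It then remains to (i) rewrite $\sqrt{-p}$-primitivity as the stated coprimality condition, (ii) remove the unit $u$, and (iii) identify $[\ft_1]\in 2^{e-2}\Cl(\calO_{H_1})$ with the condition $p\in V(e)$.

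For (i): comparing discriminants shows $\calO_{H_1}=\ZZ[i]\oplus\ZZ[i]\cdot\tfrac{1+\sqrt p}{2}$ (the relative discriminant $\mathfrak d_{H_1/K_0}$ equals $(p)$ since $(1+i)$ is unramified, and $\ZZ[i][\tfrac{1+\sqrt p}{2}]$ has discriminant $(p)$ over $\ZZ[i]$). From $\sqrt{-p}=i\sqrt p=i\bigl(2\cdot\tfrac{1+\sqrt p}{2}-1\bigr)$ we get
\[
x+y\sqrt{-p}=(x-iy)+2iy\cdot\tfrac{1+\sqrt p}{2},
\]
so $(x+y\sqrt{-p})\calO_{H_1}$ is integral and contained in $a\calO_{H_1}$ only for units $a\in\ZZ[i]$ precisely when $(x-iy)\ZZ[i]+2y\ZZ[i]=\ZZ[i]$. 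For (ii): rescaling $(x,y,z)\mapsto(vx,vy,z)$ by $v\in\ZZ[i]^\times$ multiplies $u$ by $v^2\in\{\pm1\}$, and applying complex conjugation to the whole equation replaces $u$ by $-i\bar u$ (because $\overline{1+i}=1-i=-i(1+i)$ and $n$ is an integer). Both operations preserve the condition in (i); for conjugation one checks directly that $(a-ib)\ZZ[i]+2b\ZZ[i]=\ZZ[i]\Leftrightarrow(a+ib)\ZZ[i]+2b\ZZ[i]=\ZZ[i]$ by treating odd prime divisors and the prime $(1+i)$ separately. The transformations $u\mapsto -u$ and $u\mapsto -i\bar u$ generate all of $\ZZ[i]^\times$, so solvability for some unit is equivalent to solvability for $u=1$, which together with (i) is the condition in the statement.

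For (iii): by definition $p\in V(e)$ means $[\ft]\in 2^{e-1}\Cl(H_0)$, i.e. $2^e\mid h(-4p)$, where $\ft$ is the prime of $H_0=\QQ(\sqrt{-p})$ over $2$; recall $[\ft]$ has order $2$ and $\Cl(H_0)[2^\infty]$ is cyclic. The prime $\ft$ has exactly the two primes $\ft_1,\ft_2$ above it in $H_1$, each unramified of residue degree $1$, so $\Norm_{H_1/H_0}(\ft_1)=\ft$ and hence $\Norm_{H_1/H_0}[\ft_1]=[\ft]$ has order $2$, forcing $[\ft_1]$ to have even order. On the other hand $[\ft_1^2]$ has odd order (it is the extension of a prime of $\QQ(\sqrt p)$, whose class group has odd order), so the $2$-primary component of $[\ft_1]$ has order exactly $2$. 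Now $\Cl(H_1)[2^\infty]$ is cyclic of order $2^{\ord_2 h(-4p)-1}$ by Proposition~\ref{P:cohn corr} together with the oddness of $\#\Cl(\QQ(\sqrt p))$, so its unique element of order $2$ lies in $2^{e-2}\Cl(H_1)[2^\infty]$ if and only if $e-2\leq\ord_2 h(-4p)-2$. Since the prime-to-$2$ part of $\Cl(H_1)$ is automatically divisible by $2^{e-2}$, this gives $[\ft_1]\in 2^{e-2}\Cl(\calO_{H_1})\Leftrightarrow 2^e\mid h(-4p)\Leftrightarrow p\in V(e)$, which completes the argument.

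The step I expect to be the crux is (ii): one must make sure the extraneous unit produced by Lemma~\ref{L:cglemma} can always be normalized away without destroying primitivity, and the two symmetries above are exactly what make this work. Pinning down $\calO_{H_1}$ and the precise coprimality condition in (i), and keeping straight the $2$-adic valuations of the three class numbers appearing in (iii), also require care but are otherwise routine.
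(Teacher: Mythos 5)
Your proposal is correct and follows essentially the same route as the paper: apply Lemma~\ref{L:cglemma} with $R=\ZZ[i]$, $L=H_1$ and $\fp=\ft_1$, translate $\sqrt{-p}$-primitivity through the integral basis $\{1,\tfrac{1+\sqrt p}{2}\}$ (equivalently the paper's $\{1,\tfrac{\sqrt{-p}+i}{2}\}$), absorb the unit via multiplication by $i$ and complex conjugation, and identify $[\ft_1]\in 2^{e-2}\Cl(H_1)$ with $p\in V(e)$ using Proposition~\ref{P:cohn corr} and the cyclicity of $\Cl(H_1)[2^\infty]$. Your step (iii) is in fact more detailed than the paper's one-line appeal to the preceding discussion, and your norm-map argument that $[\ft_1]$ has even order is a clean way to pin down that its $2$-part has order exactly $2$.
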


\begin{proof}
Lemma~\ref{L:cglemma} with $(L,R,\fp,d,p)$ taken to be
$(H_1,\ZZ[i],\ft_1,p,1+i)$ (a shift in symbols used seems unavoidable here),
links the equation in the lemma to the question whether $[\ft_1]\in 2^{e-2}\Cl(H_1)$ and hence whether $p\in V(e)$. For $x\in\QQ(i)$ we write $\act{\sigma}{x}$ for its conjugate over $\QQ$. Note that if $(x,y)$ give rise to a solution then $(\act{\sigma}{x},\act{\sigma}{y}),(i\,x,i\,y),(\act{\sigma}{(i\,x)},\act{\sigma}{(i\,y}))$
give rise to solutions to $x^2+p\,y^2=u\alpha z^{2^{e-2}}$ where $u=i,-1,-i$. Therefore, the choice of the unit $u$ in Lemma~\ref{L:cglemma} does not affect solvability of the equation.

Note that $\{1,\frac{1}{2}(\sqrt{-p}+i)\}$ is a $\ZZ[i]$-basis of $\calO_{H_1}$. Therefore,
$(x,y)$ is a $\sqrt{-p}$-primitive pair in $\ZZ[i]$ if
\[x+y\sqrt{-p}=u+v\frac{\sqrt{-p}+i}{2},\]
with $u,v\in\ZZ[i]$ and $\gcd(u,v)=1$. That corresponds to the condition given in the lemma.
\end{proof}

\begin{proof}[Alternative proof of \eqref{E:V(3)}]
Since $V(3)\subset V(2)$, we can assume that $p\in V(2)$ and hence that $p\equiv 1\pmod 8$.
Lemma~\ref{L:cg iso criterion} yields that a necessary condition for $p\in V(3)$ is that
\[\legendre{i+1}{p}=1.\]
for both choices of $i$, because otherwise the conic given by \eqref{E:K0conic} does not even have local points at a place above $p$. However, note that
\[\legendre{1+i}{p}\legendre{1-i}{p}=\legendre{2}{p}=1\]
because $p\equiv 1\pmod 8$. Hence, the symbol does not depend on the choice of $i$. Furthermore, we can check that at $(1+i)\ZZ[i]$ there is no local obstruction to primitive solutions. The Hasse-Minkowski theorem once again guarantees the existence of rational solutions and the homogeneity of the equation allows us to derive primitive solutions from that. Therefore, the condition is also sufficient.
\end{proof}

\begin{proof}[Proof of Theorem~\ref{T:V(4)}]
Let us assume that $p\in V(3)$. By Lemma~\ref{L:cg iso criterion} we have that $p\in V(4)$ if and only if we have a solution $x,y,z\in\QQ(i)$ to
\[-p\,y^2=x^2-(1+i)z^4,\]
satisfying the additional conditions stated.
We adopt the notation from Section~\ref{S:notation} and factor this equation over $K_1$ to obtain
\[\begin{cases}
x+z^2\alpha=\delta\,\xi_1^2\\
x-z^2\alpha=\act{\rho}{\delta}\,\act{\rho}{\xi}_1^2
  \end{cases},
\]
for some $\delta$ representing a class in $K_1^\times/(K_1^{\times2})$ such that $N_{K_1/K_0}(\delta)\in -p K_0^{\times 2}$. Our primitivity condition together with the fact that $2$ is completely ramified in $K_1$ yields that $\delta$ can be represented by an algebraic integer that is a unit outside the primes above $p$.

Our conditions on $p$ ensure that $p$ is completely split in $K_1$. Let $u,v\in\ZZ$ be such that $p=u^2+v^2$ and suppose that $\pi_1,\ldots,\pi_4\in \calO_{K_1}$ such that $\Norm_{K_1/\QQ}(\pi_i)=p$ and
$\pi_1\pi_2=u+iv$ and $\pi_3\pi_4=u-iv$. The unit group of $\calO_{K_1}$ is generated by $\{i,1+\alpha\}$. Since $\Norm_{K_1/\QQ(i)}(1+\alpha)=-1$ is not a square,
the possible values for $\delta$ are
\[\pi_1\pi_3,\pi_1\pi_4,\pi_2\pi_3,\pi_2\pi_4,i\pi_1\pi_3,i\pi_1\pi_4,i\pi_2\pi_3,i\pi_2\pi_4\]

A necessary condition for $p\in V(4)$ is that
\begin{equation}\label{E:V4conic}
2\,z^2\alpha=\delta\,\xi_1^2-\act{\rho}{\delta}\,\act{\rho}{\xi}_1^2.
\end{equation}
has solutions everywhere locally. Noting that $i$ is a square modulo $p$, we see that there must be $j,k\in\{1,\ldots,4\}$ with $\{j,k\}\neq \{1,2\},\{3,4\}$ such that for all $l$ we have
\[\legendre{\pi_{j}\pi_{k}\alpha}{\pi_l}\neq -1.\]
However, note that Lemma~\ref{L:soroosh} yields identities such as
\[\legendre{\pi_1\pi_2}{\pi_3}=\legendre{u+iv}{\pi_3}=\legendre{i}{\pi_3}=1,\]
which allow us to deduce that the value does not depend on the actual choices of $j,k,l$ as long as $l\notin\{j,k\}$. Note that $\act{\rho}{\delta}\,\delta$ is a square locally at the prime above $2$, so we do not get any local obstructions there either. Therefore, the condition in the theorem is sufficient for \eqref{E:V4conic} to have points everywhere locally and hence globally. Checking that these points also give rise to primitive solutions is routine.
\end{proof}

\section{Congruent numbers: the first step}\label{S:congknown}

All classical results on congruent primes can be obtained via straightforward $2$-(isogeny) descent on either $E_p$ or one of its $2$-isogenous curves. See for instance \cite{hemenway:mscthesis}. We only state the parts that are important for our subsequent analysis.
\begin{enumerate}[(i)]
 \item If $p\equiv 3\pmod{8}$ then $\rk E_p(\QQ)=0$ and $\Sha(E_p/\QQ)[2]=0$.
 \item If $p\equiv 5,7 \pmod{8}$ then $\rk E_p(\QQ)\leq 1$. In fact, Monsky \cite{monsky:congnum} establishes equality and hence $\Sha(E_p/\QQ)[2]=0$.
 \item If $p\equiv 1\pmod{8}$ then $\rk E_p(\QQ)\leq 2$.
\end{enumerate}
In the last case, $p\equiv 1\pmod{8}$, some further work shows that the homogeneous spaces from \eqref{E:Cpi}
are everywhere locally solvable and that rational points on them would give rise to independent points on $E_p$. We analyze when this can be the case for $C_{p,1}$ and $C_{p,2}$.

\begin{lemma}\label{L:Dp1}
Let $p\equiv 1\pmod{8}$ be a prime. Then $C_{p,1}$ has a rational point if and only if the following curve has one.
\[D_{p,1}\colon v^2=p(u^4-4u^3-6u^2-12u-7)\]
Furthermore, $D_{p,1}$ has points everywhere locally if and only if $\tleg{1+i}{p}=1$.
\end{lemma}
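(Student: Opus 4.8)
I treat the two assertions of the lemma separately.

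For the first assertion, $C_{p,1}$ is a torsor under $E_p$, and a computation with the invariants of the quartic $u^4-4u^3-6u^2-12u-7$ shows that $\operatorname{Jac}(D_{p,1})$ is the elliptic curve $2$-isogenous to $E_p$ (the one carrying a rational point of order $2$ but not full rational $2$-torsion). The plan is to exhibit $D_{p,1}$ as one of the covering curves $D\to C_{p,1}$ arising from a descent by the dual $2$-isogeny on $C_{p,1}$. I would run this descent using the factorization $x^4-6x^2+1=(x^2+2x-1)(x^2-2x-1)$ and working in the principal ideal domain $\ZZ[\sqrt2]$ --- whose fundamental unit $1+\sqrt2$ has norm $-1$, exactly as in the proof of \eqref{E:V(3)} --- reading off, on a rational point of $C_{p,1}$, the ideal factorizations of the four linear factors $x-1\mp\sqrt2$ and $x+1\mp\sqrt2$, and after collecting coefficients with respect to $\sqrt2$ arriving at the equation of $D_{p,1}$. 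The covering morphism $D_{p,1}\to C_{p,1}$ (of degree $2$, defined over $\QQ$) immediately gives that a rational point of $D_{p,1}$ yields one of $C_{p,1}$; for the converse, a rational point of $C_{p,1}$ produces a rational point on \emph{some} member of the descent family, and one checks that $D_{p,1}$ is the only member that can be everywhere locally soluble, so that member must be $D_{p,1}$.

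For the second assertion, write $g(u)=u^4-4u^3-6u^2-12u-7$, so that $D_{p,1}\colon v^2=p\,g(u)$, and recall $p\equiv1\pmod8$. I would check local solubility place by place and observe that only $p$ can obstruct. At the real place $g(-1)=4>0$. At $2$: since $g(0)=-7\equiv1\pmod8$ and $p\equiv1\pmod8$, the element $-7p$ is a square in $\QQ_2$, so $(0,\sqrt{-7p})$ is a $\QQ_2$-point. For $\ell\nmid 2p$: one computes $\disc g=-2^{20}$, so $D_{p,1}$ has good reduction at $\ell$; as torsors under an elliptic curve over a finite field are trivial, the reduction has an $\FF_\ell$-point, which lifts by Hensel to a $\QQ_\ell$-point. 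Hence $D_{p,1}$ is everywhere locally soluble if and only if it has a $\QQ_p$-point. At $p$, note $p\nmid\disc g$, so every root of $g$ modulo $p$ is simple; if $g$ has such a root it lifts by Hensel to $\tilde u_0\in\ZZ_p$ with $g(\tilde u_0)=0$, giving the $\QQ_p$-point $(\tilde u_0,0)$. Conversely a valuation count excludes $\QQ_p$-points when $g$ has no root modulo $p$: for $u\in\ZZ_p$ with $g(u)\in\ZZ_p^\times$ one has $v_p(p\,g(u))=1$, for $v_p(u)<0$ it is again odd, and the two points at infinity of the affine model are defined over $\QQ_p(\sqrt p)$, not over $\QQ_p$. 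So solubility at $p$ is equivalent to $g$ having a root modulo $p$.

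To identify this last condition, observe that the four roots of $g$ are $1-\sqrt2\pm2\sqrt{1-\sqrt2}$ and $1+\sqrt2\pm2\sqrt{1+\sqrt2}$. Since $p\equiv1\pmod8$ we have $\sqrt2\in\FF_p$, so $g$ has a root modulo $p$ precisely when $1+\sqrt2$ or $1-\sqrt2$ is a square modulo $p$; and because $(1+\sqrt2)(1-\sqrt2)=-1$ is a square modulo $p$ (as $p\equiv1\pmod4$), the symbols $\tleg{1+\sqrt2}{p}$ and $\tleg{1-\sqrt2}{p}$ agree, so the condition is $\tleg{1+\sqrt2}{p}=1$. Finally the identity $(1+\sqrt2)(1+i)=(\zeta^3-1)^2$ used in \eqref{E:1+sqrt2 vs 1+i} gives $\tleg{1+\sqrt2}{p}=\tleg{1+i}{p}$, which is the stated criterion.

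The main obstacle is the first assertion. Producing the $\QQ$-morphism $D_{p,1}\to C_{p,1}$ is mechanical once the descent is set up, but pinning down the precise model $D_{p,1}$ (rather than some twist of it) requires careful bookkeeping of units, signs and primitivity conditions, and the converse implication rests on showing that $D_{p,1}$ is the unique everywhere-locally-soluble curve in the descent family --- the step where the hypothesis $p\equiv1\pmod8$ is genuinely used. By contrast, once the bad primes $2$ and $p$ have been isolated, the local computations behind the second assertion are routine.
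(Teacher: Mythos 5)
Your handling of the second assertion is correct and is essentially the paper's own argument, carried out with more care at the places away from $p$: the real point from $g(-1)=4$, the $\QQ_2$-point $(0,\sqrt{-7p})$, good reduction away from $2$ and $p$ (your value $\disc g=-2^{20}$ is correct), the parity-of-valuation argument forcing $g$ to have a root in $\QQ_p$, and the reduction of that condition to $\tleg{1+\sqrt{2}}{p}=\tleg{1+i}{p}$ using $(1+\sqrt{2})(1-\sqrt{2})=-1$ and \eqref{E:1+sqrt2 vs 1+i}. Your roots $1\pm\sqrt{2}\pm2\sqrt{1\pm\sqrt{2}}$ are the right ones; in fact the factorization displayed in the paper's proof has a typo in its constant terms (they should be $-1\mp2\sqrt{2}$), and your computation agrees with the corrected version and with the stated discriminants $16(1\pm\sqrt{2})$.

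The first assertion, however, is only a plan, and as written it does not establish the claim. The substantive content of the paper's proof is exactly the bookkeeping you defer: lift a rational point of $C_{p,1}$ to the pair of conics $w^2=\delta(x^2+2x-1)$, $v^2=\frac{p}{\delta}(x^2-2x-1)$ over $\QQ$; bound $\delta$ to $\{\pm1,\pm2,\pm p,\pm2p\}$ by a resultant computation; discard $\delta\in\{\pm2,\pm2p\}$ by a local computation at $2$; observe that the automorphisms $x\mapsto -x$ and $x\mapsto 1/x$ of $C_{p,1}$ identify the four remaining twists; and finally parametrize the conic $w^2=x^2+2x-1$ to obtain the quartic model. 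Two specific points in your plan are off. First, your converse rests on ``$D_{p,1}$ is the only member of the descent family that can be everywhere locally soluble,'' which is false as stated: the twists with $\delta\in\{\pm1,\pm p\}$ are locally soluble simultaneously, being isomorphic to one another; the correct statement is that they form a single isomorphism class, so a rational point of $C_{p,1}$ lands on a curve isomorphic to $D_{p,1}$. Second, reading off ideal factorizations of the four linear factors $u-1\mp\sqrt{2}$, $u+1\mp\sqrt{2}$ over $\ZZ[\sqrt{2}]$ is the set-up for a full second $2$-descent, which produces degree-$4$ covers of $C_{p,1}$; it is not the descent by the dual $2$-isogeny that yields the degree-$2$ cover $D_{p,1}\to C_{p,1}$, and no parametrization step appears in your outline, so the stated quartic model is never actually derived. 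Until these steps are carried out, the equivalence ``$C_{p,1}$ has a rational point iff $D_{p,1}$ does'' remains unproved.
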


\begin{proof}
For any rational point on $C_{p,1}$ there exists a value $\delta$ (determined up to squares) such that the point lifts to
\[D_{p,1}\colon
\begin{cases}
w^2=\delta(x^2+2x-1)\\
v^2=\frac{p}{\delta}(x^2-2x-1).
\end{cases}
\]
With some elementary resultant computation, one can show that it is sufficient to consider $\delta\in \{\pm 1,\pm 2, \pm p,\pm 2p\}$ and a straightforward local computation shows that for $\delta\in\{\pm2,\pm 2p\}$ the curve does not have $\QQ_2$-points.

Furthermore, the automorphisms of $C_{p,1}$ corresponding to $x\mapsto \frac{1}{x}$ and $x\mapsto -x$ show that the remaining values for $\delta$ all lead to isomorphic curves, so it is sufficient to consider $\delta=1$.

We parametrize the first conic by $(x,w)=\big(\frac{u^2+1}{2(u+1)},\frac{u^2+2u-1}{2(u+1)}\big)$. Substituting this parametrization into the second conic yields the given model of $D_{p,1}$. 

Note that since $p\equiv 1\pmod 8$, the local solvability of $D_{p,1}$ over $\QQ_2$ does not 
depend on $p$. Similarly, because $p>0$, the local solvability over $\RR$ does not depend on 
$p$ either. Given that $D_{p,1}$ has good reduction at all other primes, the only obstruction 
to local solvability can be at $p$. Note that if $(u_0,v_0)\in D_{p,1}(\QQ_p)$ then we need 
that $\ord_p(u_0^4-4u_0^3-6u_0^2-12u_0-7)$ is odd. For this we need that the quartic has a root 
in $\QQ_p$. Note that
\[u^4-4u^3-6u^2-12u-7=(u^2-2(1+\sqrt{2})u-1-\sqrt{2})(u^2-2(1-\sqrt{2})u-1+\sqrt{2})\]
and that the quadratics on the right hand side have discriminants $16(1\pm\sqrt{2})$. 
Furthermore, since $p\equiv 1\pmod 8$ we have $\sqrt{2}\in\QQ_p$, so $D_{p,1}(\QQ_p)$ is 
non-empty if and only if
\[\legendre{1+\sqrt{2}}{p}=\legendre{1+i}{p}=1;\]
see \eqref{E:1+sqrt2 vs 1+i} for the reason why the first equality holds.
\end{proof}

\begin{lemma}\label{L:Dp2}
Let $p\equiv 1\pmod{8}$ be a prime. Then $C_{p,2}$ has a rational point if and only if the following curve has one.
\[D_{p,2}\colon v^2=p(u^4-4u^3+24u+20)\]
Furthermore, the curve $D_{p,2}$ has points everywhere locally if and only if $\tleg{i+1}{p}=1$.
\end{lemma}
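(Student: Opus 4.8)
The plan is to imitate the proof of Lemma~\ref{L:Dp1}, running the $2$-descent attached to the factorisation $C_{p,2}\colon y^2=p(x^2+2x+2)(x^2-2x+2)$ from~\eqref{E:Cpi}. A rational point on $C_{p,2}$ whose $x$-coordinate is not a root of the right-hand side determines a class $\delta\in\QQ^\times/(\QQ^{\times2})$ (the class of the value of $x^2+2x+2$ there) such that the point lifts to the fibre product of conics
\[
\begin{cases}
w^2=\delta(x^2+2x+2)\\
v^2=\tfrac{p}{\delta}(x^2-2x+2).
\end{cases}
\]
The resultant of $x^2+2x+2$ and $x^2-2x+2$ is $\pm2^5$, so the usual argument restricts $\delta$ to $\{\pm1,\pm2,\pm p,\pm2p\}$; since $x^2+2x+2=(x+1)^2+1>0$, solvability over $\RR$ then forces $\delta>0$. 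Now $C_{p,2}\colon y^2=p(x^4+4)$ carries the commuting $\QQ$-automorphisms $x\mapsto-x$ and $x\mapsto 2/x$, which act on $\delta$ by $\delta\mapsto p\delta$ and $\delta\mapsto 2\delta$; the group they generate therefore permutes $\{1,2,p,2p\}$ transitively, so it is enough to treat $\delta=1$ (the excluded $x$-values---the roots of $x^4+4$ and the two points at infinity---are not rational and cause no trouble).

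For $\delta=1$ the first conic $w^2=(x+1)^2+1$ has the rational point $[1:-1:0]$ at infinity; parametrising by the pencil of lines of slope $-1$ through it gives $(x,w)=\bigl(\tfrac{u^2-2}{2(u+1)},\tfrac{u^2+2u+2}{2(u+1)}\bigr)$. Substituting into $v^2=p(x^2-2x+2)=p\bigl((x-1)^2+1\bigr)$ and absorbing the denominator $4(u+1)^2$ into $v$ yields precisely $D_{p,2}\colon v^2=p(u^4-4u^3+24u+20)$. A solution of the $\delta=1$ conic system visibly produces a point of $C_{p,2}$, and conversely every rational point of $C_{p,2}$---after an automorphism to arrange $\delta=1$---lifts to such a solution; this proves the first assertion.

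For the everywhere-local statement I would argue as in Lemma~\ref{L:Dp1}. A short computation (or the factorisation below) shows the quartic has discriminant a power of $2$, so $D_{p,2}$ has good reduction outside $2$ and $p$, where it acquires local points automatically; $D_{p,2}(\RR)\ne\emptyset$ since $p>0$, and $D_{p,2}(\QQ_2)\ne\emptyset$ since $p\equiv1\pmod 8$ makes $p$ a square in $\QQ_2$, so over $\QQ_2$ the equation becomes $v^2=u^4-4u^3+24u+20$, which has the point $u=1$ (both facts being independent of $p$). Over $\QQ_p$, by contrast, $p$ is not a square, so a $\QQ_p$-point forces the quartic to have a root in $\QQ_p$. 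One then computes
\[u^4-4u^3+24u+20=\bigl(u^2+(-2+2i)u+(-4+2i)\bigr)\bigl(u^2+(-2-2i)u+(-4-2i)\bigr),\]
whose two quadratic factors have discriminants $16(1-i)$ and $16(1+i)$. Since $i\in\QQ_p$ and $\tleg{1+i}{p}\tleg{1-i}{p}=\tleg{2}{p}=1$, these two discriminants are simultaneously squares or simultaneously non-squares in $\QQ_p$, so the quartic has a root in $\QQ_p$ if and only if $\tleg{i+1}{p}=1$ (a condition independent of the choice of $i\in\QQ_p$), which is the asserted criterion.

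The main obstacle is the descent bookkeeping of the first half: pinning down the finite list of $\delta$ from the resultant, checking that $x\mapsto-x$ and $x\mapsto 2/x$ really are $\QQ$-automorphisms of $C_{p,2}$ acting transitively on the surviving values of $\delta$, and confirming that the construction gives an honest equivalence between rational points (so that the ``if and only if'' holds in both directions, not just one). Once the factorisation of the quartic is available, the local-solvability half is a routine $p$-adic computation, and obtaining that factorisation is a short undetermined-coefficients calculation.
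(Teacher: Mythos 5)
Your proposal is correct and follows essentially the same route as the paper: restrict $\delta$ to $\{1,2,p,2p\}$ via the resultant and positivity, use the automorphisms of $C_{p,2}$ to reduce to $\delta=1$, parametrise the resulting conic to obtain $D_{p,2}$, and then decide local solvability at $p$ via the factorisation of the quartic over $\QQ(i)$ and the discriminants $16(1\pm i)$. The only (cosmetic) divergence is that you correctly write the second automorphism as $x\mapsto 2/x$ where the paper writes $x\mapsto 1/x$, and you label $\delta$ by the other quadratic factor; neither affects the argument.
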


\begin{proof} A rational point on $C_{p,2}$ lifts, for some $\delta$, to
\[D_{p,2}\colon
\begin{cases}
w^2=\delta(x^2-2x+2)\\
v^2=\frac{p}{\delta}(x^2+2x+2).
\end{cases}
\]
The first conic only has real solutions if $\delta>0$ and
with an elementary resultant computation one can show it is sufficient to consider $\delta\in\{1,p,2,2p\}$. Furthermore, the automorphisms of $C_p$ corresponding to $x\mapsto \frac{1}{x}$ and $x\mapsto -x$ show that all choices lead to isomorphic curves, so it is sufficient to consider $\delta=1$.

We parametrize the first conic by $(x,w)=\big(\frac{2-u^2}{2u+2},\frac{u^2+2u+2}{2u+2}\big)$. Substitution into the second yields the given model of $D_{p,2}$.

For $p\equiv 1\pmod 8$, the curve $D_{p,2}$ has points at all primes outside $p$. For a point $(u_0,v_0)\in D_{p,2}(\QQ_p)$ we need $\ord_p(u_0^4-4u_0^3+24u_0+20)$ to be odd, so the quartic should have a root in $\QQ_p$. Note that
\[u^4-4u^3+24u+20=(u^2-(2-2i)u-4+2i)(u^2-(2+2i)u-4-2i)\]
and that the discriminants of the quadrics on the left hand side are
$(1+i)^9$ and $-i(1+i)^9$ respectively. The statement in the lemma follows by considering when these are squares in $\QQ_p$.
\end{proof}

Either of Lemmas~\ref{L:Dp1} and \ref{L:Dp2} establishes that primes $p\equiv 1\pmod{8}$ for which $\tleg{1+i}{p}=-1$ are \emph{not} congruent. This result is already mentioned in \cite{bastien:congnum} and \cite{tunnell:congnum}.
In order to interpret these results in terms of Question~\ref{Q:congnum}, we briefly review the relations between isogenies and Tate-Shafarevich groups in the next section.

\section{Sha and isogenies}

In this section, we review the conditions under which we can conclude the existence of $2^{e}$-torsion in $\Sha(E)$ by exhibiting $2^{e-1}$-torsion in $\Sha(E')$ for an appropriate $2$-isogenous elliptic curve $E'$. We use part of the proof that the truth of the Birch and Swinnerton-Dyer conjecture is constant in isogeny classes (see \cite{cassels:arith_g1-VIII} or \cite{milne:ADT}*{I.7}).

Let $E$ be an elliptic curve over a number field $k$ and let $\phi\colon E\to E'$ be an isogeny. Since elliptic curves are self-dual, we can interpret the isogeny dual to $\phi$ as $\phi^\vee\colon E'\to E$. Suppose that $m=\deg(\phi)$. Then the multiplication-by-$m$ homomorphism on $E$ factorizes as $m=\phi^\vee\circ \phi$.

Note that elements of $\Sha(E)$ are represented by principal homogeneous spaces $C$, so they have a free transitive $E$-action. We can use this together with an isogeny $\phi:E\to E'$ to induce a homomorphism
\[\begin{array}{cccc}
\phi\colon&\Sha(E)&\to&\Sha(E')\\
& C & \mapsto & C/\ker(\phi)
\end{array}\]
We write $\Sha(E)[\phi]$ for its kernel.

For any abelian group $A$ we define its \emph{divisible subgroup} to be
\[A_\div:=\{a\in A : a\in mA \text{ for all } m=1,2,\ldots\}\]
and write $A_\nd:=A/A_\div$. General results from descent show that the $p$-primary parts of $\Sha(E)_\nd$ are all finite.

The Cassels-Tate pairing yields non-degenerate, alternating pairings
\[\xymatrix{
\Sha(E)_\nd\ar[d]^\phi\ar@{}[r]|*+{\times} & \Sha(E)_\nd\ar[r]& \QQ/\ZZ\\
\Sha(E')_\nd \ar@{}[r]|*+{\times}& \Sha(E')_\nd\ar[u]^{\phi^\vee}\ar[r]& \QQ/\ZZ,
}\]
with the diagram commuting in the sense that
\[\langle \phi\xi,\xi'\rangle_{E'}=
\langle \xi,\phi^\vee\xi'\rangle_{E}.
 \]
In particular, the pairing induces an alternating, non-degenerate pairing
\begin{equation}\label{E:isosha}
\Sha(E')_\nd[\phi^\vee]\times\Sha(E')_\nd/\phi\Sha(E)_\nd\to \QQ/\ZZ.
\end{equation}

\begin{lemma}\label{L:isosha}
Let $\phi: E\to E'$ be a $p$-isogeny between elliptic curves over a number field $k$. Suppose that $\Sha(E')[\phi^\vee]=0$. 
Let $\xi\in\Sha(E)[p]$ and let $\xi'\in\Sha(E')[p]$ such that $\phi^\vee(\xi')=\xi$. Then
\[
\xi'\in p^{e-1}\Sha(E')\text{ implies that }\xi\in p^e\Sha(E).
\]
More generally, if $\Sha(E')$ has elements of order $p^e$ then $\Sha(E)$ has elements of order $p^{e+1}$.
\begin{proof}
Note that the first statement in the lemma is trivially true if $\xi'=0$ or more generally, if $\xi'$ is divisible.
Let us assume that $\xi''\in\Sha(E')$ such that $\xi'=p^{e-1}\xi''$.
From the pairing~\eqref{E:isosha} we see that $\Sha(E')[\phi^\vee]=0$ implies that $\Sha(E)\to\Sha(E')$ is surjective, so there is a $\xi'''\in \Sha(E)$ such that
$\phi(\xi''')=\xi''$. It follows that
\[\xi=\phi^\vee\circ p^{e-1}\circ \phi\, \xi'''=p^e\xi''',\]
which gives the statement in the lemma.

For the general observation, let $\xi''\in\Sha(E')$ be an element of order $p^e$, let $\xi'=p^{e-1}\xi''$ and let $\xi=\phi^\vee\xi'$. Then $\xi$ is an element of order $p$ that is divisible by $p^e$. This proves the statement.
\end{proof}

\end{lemma}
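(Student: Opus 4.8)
The plan is to exploit the non-degenerate alternating pairing~\eqref{E:isosha} together with the factorisation $\phi^\vee\circ\phi=[p]$, which is the engine that lets divisibility on $E'$ be transported to divisibility on $E$. First I would dispose of the trivial cases: if $\xi'=0$, or more generally if $\xi'$ lies in the divisible subgroup $\Sha(E')_\div$, then $\xi=\phi^\vee(\xi')$ is itself divisible (since $\phi^\vee$ is a group homomorphism and takes divisible elements to divisible elements), and the implication holds vacuously. So assume $\xi'$ is non-divisible; the hypothesis $\xi'\in p^{e-1}\Sha(E')$ then gives an actual witness $\xi''\in\Sha(E')$ with $\xi'=p^{e-1}\xi''$.

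Next I would invoke the vanishing hypothesis $\Sha(E')[\phi^\vee]=0$. Reading the pairing~\eqref{E:isosha}, the left factor $\Sha(E')_\nd[\phi^\vee]$ is forced to vanish, and non-degeneracy then makes the right factor $\Sha(E')_\nd/\phi\Sha(E)_\nd$ vanish as well; i.e.\ the induced map $\phi\colon\Sha(E)_\nd\to\Sha(E')_\nd$ is surjective. A small point to handle is the passage between $\Sha$ and $\Sha_\nd$: since the $p$-primary part of $\Sha(E')_\nd$ is finite and $\xi''$ (being $p$-power torsion) has image in that finite group, surjectivity mod the divisible subgroup, combined with the fact that $\phi$ maps $\Sha(E)_\div$ into $\Sha(E')_\div$, lets me lift $\xi''$ itself (up to a divisible correction, which does not affect the conclusion) to some $\xi'''\in\Sha(E)$ with $\phi(\xi''')=\xi''$. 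Then
\[
\xi=\phi^\vee(\xi')=\phi^\vee\bigl(p^{e-1}\xi''\bigr)=p^{e-1}\,\phi^\vee\bigl(\phi(\xi''')\bigr)=p^{e-1}[p]\xi'''=p^{e}\xi''',
\]
which is exactly $\xi\in p^e\Sha(E)$.

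For the final, more general assertion: given $\xi''\in\Sha(E')$ of exact order $p^e$, set $\xi'=p^{e-1}\xi''$, an element of order $p$ in $\Sha(E')[\phi^\vee]^{\perp}$-territory, and $\xi=\phi^\vee(\xi')$. By the first part (applied with this $\xi''$ as the witness), $\xi\in p^e\Sha(E)$. It remains to see that $\xi$ itself has order exactly $p$, not order $1$: if $\phi^\vee(\xi')=0$ then $\xi'\in\Sha(E')[\phi^\vee]=0$, contradicting that $\xi'$ has order $p$. Hence $\xi$ is a nonzero element of $\Sha(E)[p]$ that is $p^e$-divisible, so it generates a subgroup isomorphic to $\ZZ/p^{e+1}$ inside $\Sha(E)$ — strictly speaking, one chooses $\xi'''$ with $p^e\xi'''=\xi\neq0$, so $\xi'''$ has order divisible by $p^{e+1}$.

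The main obstacle I anticipate is purely bookkeeping rather than conceptual: keeping the distinction between $\Sha$ and $\Sha_\nd$ straight, and being careful that "surjective on $\Sha_\nd$" can be upgraded to "the specific torsion element $\xi''$ is hit by $\phi$" without spurious divisible-group ambiguities — all of which is harmless because a divisible correction term contributes $0$ after multiplying by any power of $p$ in the relevant finite quotient. Everything else is formal manipulation with the identity $\phi^\vee\circ\phi=[p]$ and the functoriality of the Cassels-Tate pairing already recorded above.
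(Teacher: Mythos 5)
Your proposal is correct and follows essentially the same route as the paper's own proof: dispose of the divisible case, use the non-degeneracy of the pairing \eqref{E:isosha} to convert $\Sha(E')[\phi^\vee]=0$ into surjectivity of $\phi$ on $\Sha$, lift the witness $\xi''$, and conclude via $\phi^\vee\circ\phi=[p]$. The only differences are cosmetic: you spell out the $\Sha$ versus $\Sha_\nd$ bookkeeping and the reason $\xi\neq 0$ in the final claim, both of which the paper leaves implicit.
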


\section{Results from isogeny descents}\label{S:isodesc}
The curves $D_{p,i}$ arising in Lemmas~\ref{L:Dp1} and \ref{L:Dp2} are principal homogeneous spaces for the elliptic curves
\[\begin{aligned}
E_{p,1}\colon& y^2=x^3+4p^2x\\
E_{p,2}\colon& y^2=x^3+6p\,x^2+p^2x
  \end{aligned}\]
respectively. There are $2$-isogenies $\phi_i\colon E_p\to E_{p,i}$.

\begin{proof}[Proof of \eqref{E:W(2)}]
The following discussion holds for $i=1$ or $i=2$.
We do not give details here (see \cite{silverman:AEC}*{Proposition X.4.9}), but a $2$-isogeny descent on the pair $(E,E_{p,i})$ for a prime $p\equiv 1\pmod 8$ yields that $\rk E(\QQ)\leq 2$ as expected and that $\Sha(E_{p,i})[\phi_i^\vee]=0$ (the homogeneous spaces found there correspond to the $2$-torsion of $E$). It is perhaps worth noting that for the third $2$-isogenous curve $E_{p,3}: y^2=x^3-px^2+p^2x$, this is \emph{not} the case.

As established before, the curve $C_{p,i}$ represents a class $\xi_i$ in $\Sha(E)[2]$. Note that since $2=\phi_{p,i}^\vee \circ\phi_{p,i}$, divisibility of $\xi_i$ by $2$ implies that there is an everywhere locally solvable homogeneous space of $E_{p,i}$ that covers $C_{p,i}$. Lemma~\ref{L:Dp1} or \ref{L:Dp2} shows that this must be $D_{p,i}$, so if $\tleg{1+i}{p}=-1$ then $\xi_i$ is not divisible by $2$ and $\Sha(E_p)[2^\infty]=(\ZZ/2\ZZ)^2$ and $p\in W(1)\setminus W(2)$.

If $\tleg{1+i}{p}=1$ then the class $\xi'$ of $D_{p,i}$ in $\Sha(E_{p,i})$ satisfies $\phi_i^\vee(\xi')=\xi_i$, so $\xi_i$ is indeed divisible by $2$, so $p\in W(2)$.
\end{proof}

On the other hand, if $\tleg{1+i}{p}=1$ then Lemma~\ref{L:isosha} implies that $\xi_i$ is divisible by $2$. If we can find conditions on $p$ such that the class of $D_{p,i}$ is not divisible by $2$ in $\Sha(E_{p,i})$ then we classify when $\xi_i$ is divisible by $2$ but not by $4$ and hence when $p$ is not congruent and $\Sha(E_p)[2^\infty]=(\ZZ/4\ZZ)^2$.

The reason to concentrate on $D_{p,1}$ and $D_{p,2}$ is that we can write down nice quartic models for them without using any properties about $p$. Other homogeneous spaces would arise in a similar way, but the conic that requires parametrization in order to arrive at a quartic model may only be parametrizable if certain arithmetic conditions on $p$ are taken into account, i.e., that it is a prime $p\equiv 1\pmod{8}$. We will see in the next section how the nice models of $D_{p,1}$ and $D_{p,2}$ allow us to make one further step.

\section{Second descents}\label{S:second descent}

Let $p$ be a prime satisfying $p\equiv 1\pmod {8}$ and $\tleg{1+i}{p}=1$.
In this section, we perform a second descent on the curves $D_{p,1}$ and $D_{p,2}$ to determine if their classes are divisible by $2$ in $\Sha(E_{p,1})$ and $\Sha(E_{p,2})$. For any particular $p$, this is a completely standard procedure which can be executed automatically by several computer algebra systems. We give some details here because we do the calculation for an unspecified $p$, which is not completely automated.

We quickly review the parts of the method we have to refer to explicitly. The method we use is largely as suggested in \cite{mss:fourdesc}. However, we neglect to explicitly construct the coverings. See also \cite{brusto:twocov}.

We consider the smooth projective curve corresponding to the affine model
\[D\colon y^2= f(x)\]
where $f(x)$ is a square-free quartic polynomial over a field $k$ of characteristic $0$. If the leading coefficient of $f(x)$ is a square in $k$ then $D$ has $k$-rational points $P$ with $x(P)=\infty$. We denote these points with $\infty^+$ and $\infty^-$, where the $\pm$ superscript provides an arbitrary but fixed label.

The curve $D$ is a homogeneous space of an elliptic curve $E$ and invariant theory of binary quartic forms provides a degree $4$ map $D\to E$, providing $D$ with a torsor structure under $E[2]$ with base $E$. If $k$ is a number field and $D$ has points everywhere locally then $D$ represents a class in $\Sha(E)[2]$.

Let $L=k[\theta]=k[x]/(f(x))$. We consider the map
\[
\begin{array}{rccll}
\mu_k\colon&D(k)&\to&L^\times/L^{\times2}k^\times\\
&(x_0,y_0)&\mapsto&x_0-\theta&\text{if $y_0\neq 0$}\\
&(x_0,0)&\mapsto&(x_0-\theta)+\left.\frac{f(x)}{(x-x_0)}\right|_{x=\theta}\\
&P&\mapsto&1\text{ if } P=\infty^\pm.
\end{array}
\]
For $k=\QQ$ and a place $v$ of $k$ we write $L_v=L\otimes \QQ_v$ and consider the natural map $\rho_v: L^\times/L^{\times2}\QQ \to L_v^\times/L_v^{\times 2}\QQ_v^\times$. We define
\[\Selfake^{2}(D/\QQ)=\{c\in L^\times/L^{\times2}k^\times: \rho_v(c)\in\im\mu_{k_v}\text{ for all places }v\}.\]
We have that the class of $D$ is divisible by $2$ in $\Sha(E)$ if and only if $\Selfake^2(D/\QQ)$ is non-empty.

Let $S$ be the set of places of $k$ containing $2$, the places at infinity, the places where coefficients of $f$ are not integral and the places where $\disc(f)$ is not a unit. We write $\calO_{L,S}$ for the subring of $L$ of elements that are integral at all places $v$ outside $S$.

In our cases, $L$ is a number field where $\calO_{L,S}$ has class number $1$. In those cases, $\Selfake^2(D/\QQ)$ can be represented by elements in the finitely generated multiplicative group $\calO_{L,S}^\times$. Furthermore, for such elements $\delta$ we only have to check that $\rho_v(\delta)\in\im \mu_{k_v}$ for the places $v\in S$ in order to conclude that $\delta\in\Selfake^2(D/\QQ)$

Finally, if $f(x)$ has a root in $k_v$, then $\#\im \mu_{k_v}=\# \frac{E[2](k_v)}{|2|_v}$ and if $f(x)$ has no root in $k_v$ then $\#\im \mu_{k_v}=\frac{\#E[2](k_v)}{2|2|_v}$.

We use the notation for $K_0,K_1$ and their elements as introduced in Section~\ref{S:notation}.

\begin{proposition}\label{P:descent Dp1}
Let $p\equiv 1\pmod{8}$ be a prime satisfying $\tleg{1+i}{p}=1$. Let $\delta=\delta_p\in K_1=\QQ(\sqrt{1+i})$ be an algebraic integer such that $\Norm_{K_1/\QQ(i)}(\delta)=p$ and $\fp$ is a prime ideal above $p$ such that $\delta\notin\fp$. Then the following are equivalent.
\begin{enumerate}[(i)]
  \item The class of $D_{p,1}$ in $\Sha(E_{p,1})$ is divisible by $2$.
  \item The class of $D_{p,2}$ in $\Sha(E_{p,2})$ is divisible by $2$.
  \item $\displaystyle\legendre{\delta\zeta\sqrt{1+i}}{\fp}=1.$
\end{enumerate}
\end{proposition}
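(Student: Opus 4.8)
The plan is to carry out the second descent described in the preceding section on each of the two quartics $D_{p,1}$ and $D_{p,2}$ and to compute the relevant fake Selmer conditions explicitly. First I would set up the algebra: for $D_{p,1}\colon v^2=p(u^4-4u^3-6u^2-12u-7)$ the quartic factors (as recorded in Lemma~\ref{L:Dp1}) over $N_0=\QQ(\sqrt2)$, but the splitting field of the quartic is exactly $K_1=\QQ(\sqrt{1+i})=\QQ(\alpha)$, so the algebra $L=\QQ[x]/(f(x))$ is the quartic field $K_1$ (the factor of $p$ only shifts the target of $\mu$ by $\QQ^\times$ and does not change $L$). Since $\calO_{K_1}$ has class number $1$ and, as noted in Section~\ref{S:notation}, $\calO_{K_1,S}^\times$ is generated by a small explicit set of units together with the primes above $p$ (here $S=\{2,\infty,p\}$), I would argue that $\Selfake^2(D_{p,1}/\QQ)$, if nonempty, is represented by an element $\delta'\in\calO_{K_1,S}^\times$ whose norm to $K_0$ lies in $pK_0^{\times2}$ (this norm condition comes from the leading-coefficient/twist-by-$p$ bookkeeping). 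Up to squares and up to the known units $i$ and $1+\alpha$ (the latter of norm $-1$), such a $\delta'$ is forced to be $\delta_p$ or $i\delta_p$ or a unit times these; I would reduce the local analysis to deciding which of these candidates actually lies in $\im\mu_v$ for every $v\in S$.

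Next I would dispose of the places $v\in\{2,\infty\}$ and the split primes above $p$ other than $\fp$. Since $p\equiv1\pmod8$, everything above $2$ and $\infty$ is independent of $p$, and one checks directly (using the parametrizing conics of Lemma~\ref{L:Dp1}, which show $D_{p,1}$ has the expected local points there) that the local conditions at $2$ and $\infty$ are automatically satisfied by the candidate $\delta_p$ and only pin down which unit multiple and which of $\delta_p$ versus $i\delta_p$ to take. At a prime $\fp_l\mid p$ with $\fp_l\neq\fp$, the image of $\mu$ is governed by whether $f$ has a root in the completion and by the order of the point; since $\delta_p$ is a $\fp_l$-adic unit there while $p$ is a uniformizer, the only surviving condition at $\fp$ is a quadratic-residue condition. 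Using Lemma~\ref{L:soroosh Z[i]} (which is exactly designed to make symbols like $\tleg{\pi_j\pi_k\alpha}{\pi_l}$ independent of the choices of conjugates and of which prime above $p$ one divides by), I would show that the symbols at all the $\fp_l$ collapse to a single symbol and that, after absorbing the units $i$ and $1+\alpha$ and the factor $\alpha=\sqrt{1+i}$ coming from the root of $f$, the condition at $p$ becomes precisely $\tleg{\delta\zeta\sqrt{1+i}}{\fp}=1$. The primitive eighth root of unity $\zeta$ enters because $p\equiv1\pmod8$ forces $\zeta\in\QQ_p$ and the factorization of $f$ over $\QQ_p$ (via $\sqrt2=\zeta+\zeta^{-1}$, equivalently $\epsilon^2-1$ in the notation of Section~\ref{S:notation}) introduces exactly this $\zeta$; this is the computation that fixes the extra symbol in Theorem~\ref{T:W(3)} relative to Theorem~\ref{T:V(4)}.

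For the equivalence of (i) and (ii) I would run the same procedure on $D_{p,2}\colon v^2=p(u^4-4u^3+24u+20)$, whose quartic (Lemma~\ref{L:Dp2}) has discriminant a power of $(1+i)$ up to a unit and which therefore has the same splitting field $K_1$; the algebra $L$ is again $K_1$, the set $S$ is again $\{2,\infty,p\}$, and the norm condition on the Selmer candidate is again that the norm to $K_0$ lie in $pK_0^{\times2}$. The point is that the $K_1$-arithmetic is identical, so the only possible difference between the two descents is in the local conditions at $2$; I would check by a direct $\QQ_2$-computation (using the two explicit quadratic factors and their discriminants $16(1\pm\sqrt2)$ versus $(1+i)^9$ and $-i(1+i)^9$) that the image of $\mu$ at $2$ is the same in both cases, so that the two fake Selmer sets are cut out by identical conditions and hence are simultaneously empty or nonempty. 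Combined with the criterion ``class of $D$ divisible by $2$ in $\Sha(E)$ $\iff$ $\Selfake^2(D/\QQ)\neq\emptyset$'' from the previous section, this yields (i)$\iff$(ii)$\iff$(iii).

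The main obstacle I expect is the bookkeeping at the prime $p$: one must (a) pin down the finite list of candidate representatives in $\calO_{K_1,S}^\times$ modulo squares and modulo the constraint imposed by the $2$-adic and real conditions, (b) compute $\#\im\mu_{k_v}$ correctly at each $\fp_l\mid p$ using the ``root / no root'' dichotomy and the $2$-torsion count, and (c) verify, via Lemma~\ref{L:soroosh Z[i]}, that the resulting product of local symbols really is independent of all the choices ($\delta_p$, $\fp_p$, the labeling $\pi_1\cdots\pi_4$ of primes above $p$, and the conjugate of $\sqrt{1+i}$) so that the clean symbol $\tleg{\delta\zeta\sqrt{1+i}}{\fp}$ is well defined. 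The other routine-but-delicate point is confirming that everywhere-locally-solvable fake Selmer elements actually lift to genuine covering spaces (equivalently, that the ``fake'' Selmer set being nonempty suffices here), but that is exactly the input we are allowed to quote from Section~\ref{S:second descent}.
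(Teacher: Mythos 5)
Your overall strategy --- computing $\Selfake^2(D_{p,i}/\QQ)$ with $S=\{2,p,\infty\}$, cutting the candidate list down by a norm condition and the $2$-adic image of $\mu$, and then reducing the condition at $p$ to a single quadratic symbol via Lemmas~\ref{L:soroosh} and~\ref{L:soroosh Z[i]} --- is exactly the paper's. But there is a concrete error in your setup for $D_{p,1}$ that breaks the argument. The quartic $u^4-4u^3-6u^2-12u-7$ factors over $N_0=\QQ(\sqrt2)$ into quadratics whose discriminants lie in the square classes of $1\pm\sqrt2$, so the \'etale algebra $L=\QQ[x]/(f)$ is the quartic field $N_1=\QQ(\sqrt{1+\sqrt2})=\QQ(\epsilon)$, \emph{not} $K_1=\QQ(\sqrt{1+i})$. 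These fields are not isomorphic: $K_1$ contains $\QQ(i)$ and is totally imaginary, while $N_1$ contains $\QQ(\sqrt2)$ and has real embeddings (indeed $K_1\not\ni\sqrt2$, since otherwise $K_1=\QQ(\zeta_8)$ would be normal). Only for $D_{p,2}$, whose quadratic factors over $\QQ(i)$ have discriminants $(1+i)^9$ and $-i(1+i)^9$, is the algebra $K_1$. Consequently your unit-group bookkeeping for $D_{p,1}$ (candidates $\delta_p$, $i\delta_p$ times powers of $1+\alpha$) is in the wrong group; the paper works with $\calO_{N_1,S}^\times=\langle-1,\epsilon,\eta,\epsilon+1,\pi_1,\ldots,\pi_4\rangle$ and the $2$-adic condition eliminates the classes $\eta\pi_i$, leaving representatives $\pi_i$ that are products of \emph{pairs} of conjugates of a prime $\pi$ of $M_2$.

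This misidentification also undermines your route to (i)$\iff$(ii). You argue that ``the $K_1$-arithmetic is identical, so the only possible difference between the two descents is in the local conditions at $2$,'' which would make the equivalence of (i) and (ii) essentially formal. In fact the two second descents take place in different quartic fields, and the substantive content of the proposition is precisely that two a priori unrelated conditions at $p$ --- one coming from symbols attached to primes of $N_1$, the other from primes of $K_1$ --- both collapse, after pushing everything up to $M_2$ and taking norms down to $K_1$, $N_0$, $M_1$, and $K_0$, to the same symbol $\tleg{\delta\zeta\sqrt{1+i}}{\fp}$. That collapse requires the explicit manipulations with $\Norm_{M_2/K_1}$, $\Norm_{M_2/N_0}$, $\Norm_{M_2/M_1}$ and both Lemma~\ref{L:soroosh} and Lemma~\ref{L:soroosh Z[i]} (the latter applied with $D=i$ to produce the factor $\tleg{\zeta}{\pi}$); it cannot be obtained by comparing $2$-adic images alone. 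The remaining ingredients of your sketch (the criterion ``divisible by $2$ iff $\Selfake^2\neq\emptyset$,'' the root/no-root count for $\#\im\mu_{k_v}$, and the well-definedness of the final symbol) are correctly identified.
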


\begin{proof}[Proof that (i) is equivalent to (iii)]
We compute $\Selfake^{2}(D_{p,1})$ as sketched above. We have $f(x)=p(x^4-4x^3-6x^2-12x-7)$ and $L=N_1$. 

We can take $S=\{2,p,\infty\}$. Since $2$ and $p$ are completely ramified and split respectively, we have $4$ prime ideals $\fp_1,\ldots,\fp_4$ of $\calO_{N_1}$ above $p$. We choose generators for them in the following way. Let $\pi\in\calO_{M_2}$ be a generator of a prime ideal of $\calO_{M_2}$ above $p$.
Since $\calO_{M_2}^\times$ surjects onto $(\calO_{M_1}/2\calO_{M_1})^\times$, we can assume that $\pi\equiv 1\pmod {2\calO_{M_1}}$.
We define
\[\pi_1=\pi\,\act{\sigma}{\pi},\;
  \pi_2=\act{\rho}{\pi}\,\act{\sigma\rho}{\pi},\;
  \pi_3=\act{\tau}{\pi}\,\act{\sigma\tau}{\pi},\;
  \pi_4=\act{\tau\sigma}{\pi}\,\act{\sigma\tau\sigma}{\pi}\]
and write $\fp_i=\pi_i\calO_{N_1}$. Let $\iota: M_2\to \QQ_p$ be the completion corresponding to $\pi\calO_{M_2}$. We identify $M_2$ with its image under $\iota$. The completions $\iota_1,\ldots,\iota_4:N_1\to \QQ_p$ with respect to $\fp_i$ are induced by $\iota, \iota\rho, \iota\tau\sigma, \iota\sigma\tau$ respectively. 

From $\Norm_{N_1/\QQ}(\epsilon+1)=-2$ we know that
\[\calO_{L,S}^\times=\langle-1,\epsilon,\eta,\epsilon+1,\pi_1,\ldots,\pi_4\rangle.\]
Furthermore, $\pi_2\pi_3\pi_4=p/\pi_1$, so $\pi_1$ and $\pi_2\pi_3\pi_4$ represent the same class in $\calO_{L,S}^\times/\calO_{L,S}^{\times 2} \QQ^\times$. It is straightforward to check that classes in $\Selfake^2(D_{p,1})$ must be represented by $S$-integers that have norm in $p\QQ^{\times2}$. This means a full set of representatives for $\Selfake^2(D_{p,1})$ can be found in
\begin{equation}\label{E:Dp1 size8}
\{\pi_i,\eta\pi_i : i=1,\ldots,4\}.
\end{equation}
Note that $p$ is a square at $2$, so $\im \mu_{\QQ_2}$ is independent of $p$. We have that $f(x)$ is irreducible over $\QQ_2$ and that $\#E_{p,1}[2](\QQ_2)=2$, so $\#\im \mu_{\QQ_2} = 2$. We compute that
\[\ker (N\colon\frac{L_2^\times}{L_2^{\times2}\QQ_2^\times} \to \frac{\QQ_2^\times}{\QQ_2^{\times2}})\simeq(\ZZ/2\ZZ)^2,\]
that the images of $(0,\sqrt{-7p}),\infty^+\in D_{p,1}(\QQ_2)$ form $\im \mu_{\QQ_2}$. We find that it can be represented by $\{1,\epsilon^2\}+2\calO_{N_1}$.
Our earlier normalization ensures that $\pi_i\equiv 1\pmod{2\calO_{N_1}}$ and computation shows that $\eta\not\equiv\epsilon^2\pmod{2\calO_{N_1}}$, so from
\eqref{E:Dp1 size8} only $\{\pi_1,\ldots,\pi_4\}$ maps to $\im \mu_{\QQ_2}$.

Let $\theta=\epsilon^2-2\epsilon$ be a root of $f(x)$ in $N_1$. We know that $f(x)$ splits completely over $\QQ_p$. Let $\theta_1=\iota_1\theta,\ldots,\theta_4=\iota_4\theta$ be the roots of $f(x)$ in $\QQ_p$.
We fix $L\to L_p\simeq(\QQ_p)^4$ by $x\mapsto (\iota_1x,\ldots,\iota_4x)$.

We have $\#E_{p,1}[2](\QQ_p)=4$.  It is straightforward to check that $\im \mu_{\QQ_p}$ is represented by $\{(\theta_1,0),\ldots,(\theta_4,0)\}$, which in $L_p$ gives
\begin{equation}\label{E:Dp1 p-gens}
\begin{aligned}
(p(\theta_1-\theta_2)(\theta_1-\theta_3)(\theta_1-\theta_4),
   (\theta_1-\theta_2),
   (\theta_1-\theta_3),
   (\theta_1-\theta_4))\\
((\theta_2-\theta_1),
   p(\theta_2-\theta_1)(\theta_2-\theta_3)(\theta_2-\theta_4),
   (\theta_2-\theta_3),
   (\theta_2-\theta_4))\\
((\theta_3-\theta_1),
   (\theta_3-\theta_2),
   p(\theta_3-\theta_1)(\theta_3-\theta_2)(\theta_3-\theta_4),
   (\theta_3-\theta_4))\\
((\theta_4-\theta_1),
   (\theta_4-\theta_2),
   (\theta_4-\theta_3)),
   p(\theta_4-\theta_1)(\theta_4-\theta_2)(\theta_4-\theta_3)\\
\end{aligned}
\end{equation}
If $\pi_1$ represents a class in $\Selfake^{2}(D_{p,1})$ then based on valuations, $\pi_1$ and the first element listed in \eqref{E:Dp1 p-gens} must represent the same class modulo $L_p^{2\times}\QQ_p^\times$. That means that
\[
\legendre{\act{\rho}{\pi_1}\,\act{\tau\sigma}{\pi_1}\,\act{\sigma\tau}{\pi_1}(\theta_1-\theta_2)(\theta_1-\theta_3)(\theta_1-\theta_4)}{\pi}=
\legendre{\act{\rho}{\pi_1}(\theta_1-\theta_2)}{\pi}=
\legendre{\act{\tau\sigma}{\pi_1}(\theta_1-\theta_3)}{\pi}=
\legendre{\act{\sigma\tau}{\pi_1}(\theta_1-\theta_4)}{\pi}.
\]
We notice that the first two and the last two equalities lead to
\[\legendre{
 \act{\tau}{\pi}\act{\tau\sigma}{\pi}\act{\sigma\tau}{\pi}\act{\sigma\tau\sigma}{\pi}(\theta_1-\theta_3)(\theta_1-\theta_4)}{\pi}=
\legendre{\Norm_{M_2/N_0}(\act{\tau}{\pi})\sqrt{2}}{\pi}=1,
\]
which always holds by Lemma~\ref{L:soroosh}.
By equating the second and the last symbol we obtain
\[
\legendre{\act{\rho}{\pi}\act{\rho\sigma}{\pi}\act{\sigma\tau}{\pi}\act{\sigma\tau\sigma}{\pi}(\theta_1-\theta_2)(\theta_1-\theta_4)}{\pi}=
\legendre{\Norm_{M_2/K_1}(\act{\rho}{\pi}\act{\sigma\tau}{\pi})\zeta\alpha}{\pi}
\]
It is straightforward to check that $\delta=\Norm_{M_2/K_1}(\act{\rho}{\pi}\act{\sigma\tau}{\pi})$ satisfies the definition set out in the proposition, so this establishes that $\pi_1$ represents an element in $\Selfake^2(D_{p,1})$ if and only if $p$ satisfies condition \emph{(iii)}. The same conclusion holds for the other $\pi_i$ by symmetry.
\end{proof}

\begin{proof}[Proof that (ii) is equivalent to (iii)]
We follow the proof strategy we used for the first equivalence. We choose $\pi\in\calO_{M_2}$ in the same way and consider
\[\pi_1=\pi\,\act{\tau}{\pi},\;
  \pi_2=\act{\rho}{\pi}\,\act{\tau\rho}{\pi},\;
  \pi_3=\act{\sigma}{\pi}\,\act{\tau\sigma}{\pi},\;
  \pi_4=\act{\sigma\tau}{\pi}\,\act{\tau\sigma\tau}{\pi}.\]
The four prime ideals $\fp_1,\ldots,\fp_4$ of $\calO_{K_1}$ above $p$ are generated by the $\pi_i$ above and the completions $K_1\to\QQ_p$ with respect to $\fp_i$ are induced by the same embeddings $\iota,\rho\iota,\iota\tau\sigma,\iota\sigma\tau$ as before.

We have $f(x)=p(x^4-4x^3+24x+20)$ and $L=K_1$ and $\theta=\alpha^2-2\alpha$.
We can take $S=\{2,p,\infty\}$ and we have
\[\calO_{K_1,S}^\times=\langle i,\alpha+1,\alpha,\pi_1,\ldots,\pi_4 \rangle.\]
Note that $i=\alpha^4/2\in K_1^{\times 2}\QQ^\times$, so for representing $\Selfake^2(D_{p,2})$ we can ignore multiplication by $i$. Norm considerations show that a full set of representatives can be taken from the set
\[\{\pi_1,\ldots,\pi_4,(\alpha+1)\pi_1,\ldots,(\alpha+1)\pi_4\}.\]

A computation as before shows that the images of $\infty^+,(1,\sqrt{41p})\in D(\QQ_2)$  form $\im \mu_{\QQ_2}$, represented by $\{1,i\}+2\calO_{N_1}$.
Since $\alpha+1\not\equiv i \pmod{2\calO_{K_1}}$, we find that
$\Selfake^2(D_{p,2})$ can be represented by elements from $\{\pi_1,\ldots,\pi_4\}$.

By setting $\theta_j=\iota_j\theta$ we see that $\im\mu_{\QQ_p}$ is represented by the same formulas \eqref{E:Dp1 p-gens}. 
Based on valuations, $\pi_1$ can only represent the element corresponding to the first element there. For $\pi_1$ to represent the same class in $L_p^{\times2}/\QQ_p^\times$, we need that
\[
\legendre{\act{\rho}{\pi_1}\,\act{\tau\sigma}{\pi_1}\,\act{\sigma\tau}{\pi_1}(\theta_1-\theta_2)(\theta_1-\theta_3)(\theta_1-\theta_4)}{\pi}=
\legendre{\act{\rho}{\pi_1}(\theta_1-\theta_2)}{\pi}=
\legendre{\act{\tau\sigma}{\pi_1}(\theta_1-\theta_3)}{\pi}=
\legendre{\act{\sigma\tau}{\pi_1}(\theta_1-\theta_4)}{\pi}.
\]
The first and last equalities, together with  $(\theta_1-\theta_3)(\theta_1-\theta_4)$ being a square, yield
\[
\legendre{\act{\tau\sigma}{\pi}\act{\tau\sigma\tau}{\pi}\act{\sigma\tau}{\pi}\act{\sigma}{\pi}(\theta_1-\theta_3)(\theta_1-\theta_4)}{\pi}=\legendre{\Norm_{M_2/K_0}(\act{\sigma}{\pi})}{\pi}=1,
\]
which holds by Lemma~\ref{L:soroosh}. Equating the second and the fourth term yields
\begin{equation}\label{E:intermediate result}
\legendre{\act{\rho}{\pi}\act{\sigma\tau\sigma}{\pi}\act{\sigma}{\pi}\act{\sigma\tau}{\pi}\zeta\alpha}{\pi}=1
\end{equation}
From Lemma~\ref{L:soroosh Z[i]} with $D=i$ and $\alpha=\zeta$ we obtain
\[
\legendre{\act{\tau}{\pi}\act{\sigma\tau\sigma}{\pi}}{\pi}=\legendre{\Norm_{M_2/M_1}(\act{\tau}{\pi})}{\pi}=\legendre{\zeta}{\pi}.
\]
and another application of Lemma~\ref{L:soroosh} gives
\[
\legendre{\act{\sigma\tau}{\pi}\act{\tau}{\pi}\act{\tau\sigma}{\pi}\act{\sigma\tau\sigma}{\pi}}{\pi}=\legendre{\Norm_{M_2/N_0}(\act{\tau}{\pi})}{\pi}=
\legendre{\sqrt{2}}{\pi}=\legendre{\zeta}{\pi}.
\]
Multiplying these with \eqref{E:intermediate result} shows that it is equivalent to
\[
\legendre{\act{\rho}{\pi}\act{\sigma\tau\sigma}{\pi}\act{\sigma}{\pi}\act{\tau\sigma}{\pi}\zeta\alpha}{\pi}=1,
\]
which is the condition stated in the proposition. Conditions for the other $\pi_i$ follow by symmetry.
\end{proof}

\begin{proof}[Proof of Theorem~\ref{T:W(3)}]
We assume $p\in W(2)$. With Lemma~\ref{L:isosha} and the results of Section~\ref{S:isodesc} we have established that $[C_{p,i}]\in 4\Sha(E_p)$ if and only if $D_{p,i}\in 2\Sha(E_{p,i})$. This is exactly what either of the equivalences $\textit{(i)}\iff\textit(iii)$ and $\textit{(ii)}\iff \textit{(iii)}$ in Proposition~\ref{P:descent Dp1} establish.
\end{proof}

\section{Comparison of the methods}

This section gives an informal comparison of the methods of proof of
Theorems~\ref{T:V(4)} and \ref{T:W(3)}. They share some important characteristics and naturally the question arises to what extent a framework can be constructed in which both are applications of the same principle. A full answer is beyond the scope of this article (see for instance \cite{CT-Xu:BMforInt}) but we do sketch why Theorem~\ref{T:V(4)} is \emph{not} directly related to an isogeny.

Both questions can be interpreted in terms of local-global obstructions to rational or integral points on principal homogeneous spaces under algebraic groups.
The congruent number problem is directly formulated in this language.
We have the homogeneous spaces $C_{p,i}$ and $D_{p,i}$ under $E_p$ and $E_{p,i}$ respectively. In our case, for non-\emph{congruent} primes $p\equiv 1\pmod{8}$ and assuming that $\Sha(E_p)$ is finite, Lemma~\ref{L:isosha} yields that $\#\Sha(E_p)=4\#\Sha(E_{p,i})$, which means that analysis of the $2$- and $4$-torsion in $\#\Sha(E_{p,i})$ allows us to obtain information about $4$- and $8$-torsion in $\Sha(E_p)$. There is a long history of exploiting isogenies to obtain information about $\Sha(E)$ for elliptic curves $E$, see \cite{kramer:large_sha}.

For the class number problem, we consider integer points on $C\colon x^2+py^2=2$. The affine scheme described by this equation is a principal homogeneous space under the algebraic group scheme $T$ that describes the kernel of the norm map $\Norm\colon \QQ(\sqrt{-p})^\times\to\QQ^\times$. The fact that $C$ has integer points everywhere locally follows from the fact that there is an ideal $\ft$ of norm $2$ and the fact that $C$ does not have global integer points follows from the fact that $\ft$ is not a principal ideal.

The fundamental step that allows us to prove Theorem~\ref{T:V(4)} is to base extend to $\ZZ[i]$. Here $C$ does acquire an integer point and we are led to consider essentially the homogeneous space $D\colon x^2+py^2=1+i$ under $T'=T\times_\ZZ \ZZ[i]$ instead (noting that at $1+i$, the modified notion of primitivity actually describes a slightly different space than the model given here). Proposition~\ref{P:cohn corr} allows us to relate the information back to the quantities we are originally interested in.

Note that in the latter case, the two algebraic group schemes $T$ and $T'$ are \emph{not} isogenous. They are not even over the same base. For elliptic curves one can also use base extensions to kill part of $\Sha$, see for instance \cite{kramer:quad_ext}. This leads to particular instances of \emph{Mazur visibility} \cite{cremaz:vis}.  
See \cite{bruin:sha2vis} for an explicit description of these ideas regarding $\Sha(E)[2]$ for elliptic curves $E$. 

We searched for a proof of Theorem~\ref{T:W(3)} based on visibility but were unable to find an appropriate base extension or auxiliary elliptic curve that would work for all relevant $p$.

\section*{Acknowledgements}

We would like to thank Peter Stevenhagen for pointing out the similarities between the results for congruent numbers and those for imaginary quadratic class groups. Furthermore, we thank Soroosh Yazdani for useful discussions about Lemma~\ref{L:soroosh}, Tom Archibald for discussions about the history of the subject, Jeffrey Lagarias for useful comments and Alexa van der Waall for general discussions and suggestions.

\begin{bibdiv}
\begin{biblist}
%\bibselect{biblist}
\bib{barcoh:x2p32y2}{article}{
  author={Barrucand, Pierre},
  author={Cohn, Harvey},
  title={Note on primes of type $x^{2}+32y^{2}$, class number, and residuacity},
  journal={J. Reine Angew. Math.},
  volume={238},
  date={1969},
  pages={67--70},
  issn={0075-4102},
  review={\MR {0249396 (40 \#2641)}},
}

\bib{bastien:congnum}{article}{
  author={Bastien, L.},
  title={Nombres congruents},
  journal={Interm\'ediare des Math.},
  volume={22},
  date={1915},
  pages={231-232},
}

\bib{bruin:sha2vis}{article}{
  author={Bruin, Nils},
  title={Visualising $\text {Sha[2]}$ in abelian surfaces},
  journal={Math. Comp.},
  volume={73},
  date={2004},
  number={247},
  pages={1459--1476 (electronic)},
  issn={0025-5718},
  review={\MR {2047096 (2005c:11067)}},
  doi={10.1090/S0025-5718-04-01633-3},
}

\bib{brusto:twocov}{article}{
  author={Bruin, Nils},
  author={Stoll, Michael},
  title={Two-cover descent on hyperelliptic curves},
  journal={Math. Comp.},
  volume={78},
  date={2009},
  number={268},
  pages={2347--2370},
  issn={0025-5718},
  review={\MR {2521292 (2010e:11059)}},
  doi={10.1090/S0025-5718-09-02255-8},
}

\bib{cassels:arith_g1-IV}{article}{
  author={Cassels, J. W. S.},
  title={Arithmetic on curves of genus $1$. IV. Proof of the Hauptvermutung},
  journal={J. Reine Angew. Math.},
  volume={211},
  date={1962},
  pages={95--112},
  issn={0075-4102},
  review={\MR {0163915 (29 \#1214)}},
}

\bib{cassels:arith_g1-VIII}{article}{
  author={Cassels, J. W. S.},
  title={Arithmetic on curves of genus 1. VIII. On conjectures of Birch and Swinnerton-Dyer},
  journal={J. Reine Angew. Math.},
  volume={217},
  date={1965},
  pages={180--199},
  issn={0075-4102},
  review={\MR {0179169 (31 \#3420)}},
}

\bib{cohn:classinvit}{book}{
  author={Cohn, Harvey},
  title={A classical invitation to algebraic numbers and class fields},
  note={With two appendices by Olga Taussky: ``Artin's 1932 G\"ottingen lectures on class field theory'' and ``Connections between algebraic number theory and integral matrices''; Universitext},
  publisher={Springer-Verlag},
  place={New York},
  date={1978},
  pages={xiii+328},
  isbn={0-387-90345-3},
  review={\MR {506156 (80c:12001)}},
}

\bib{cohn-lagarias:density1}{article}{
  author={Cohn, H.},
  author={Lagarias, J. C.},
  title={Is there a density for the set of primes $p$ such that the class number of ${\bf Q}(\sqrt {-p})$ is divisible by $16$?},
  conference={ title={Topics in classical number theory, Vol. I, II}, address={Budapest}, date={1981}, },
  book={ series={Colloq. Math. Soc. J\'anos Bolyai}, volume={34}, publisher={North-Holland}, place={Amsterdam}, },
  date={1984},
  pages={257--280},
  review={\MR {781142 (87d:11087)}},
}

\bib{cohn-lagarias:density2}{article}{
  author={Cohn, H.},
  author={Lagarias, J. C.},
  title={On the existence of fields governing the $2$-invariants of the classgroup of ${\bf Q}(\sqrt {dp})$ as $p$ varies},
  journal={Math. Comp.},
  volume={41},
  date={1983},
  number={164},
  pages={711--730},
  issn={0025-5718},
  review={\MR {717716 (85b:11082)}},
  doi={10.2307/2007707},
}

\bib{CT-Xu:BMforInt}{article}{
  author={Colliot-Th{\'e}l{\`e}ne, Jean-Louis},
  author={Xu, Fei},
  title={Brauer-Manin obstruction for integral points of homogeneous spaces and representation by integral quadratic forms},
  note={With an appendix by Dasheng Wei and Xu},
  journal={Compos. Math.},
  volume={145},
  date={2009},
  number={2},
  pages={309--363},
  issn={0010-437X},
  review={\MR {2501421 (2010c:11072)}},
  doi={10.1112/S0010437X0800376X},
}

\bib{cremaz:vis}{article}{
  author={Cremona, John E.},
  author={Mazur, Barry},
  title={Visualizing elements in the Shafarevich-Tate group},
  journal={Experiment. Math.},
  volume={9},
  date={2000},
  number={1},
  pages={13--28},
  issn={1058-6458},
  review={\MR {1758797 (2001g:11083)}},
}

\bib{dickson:histvolII}{book}{
  author={Dickson, Leonard Eugene},
  title={History of the theory of numbers. Vol. II: Diophantine analysis},
  publisher={Washington Carnegie Institution of Washington},
  date={1919},
  pages={xxv+803},
}

\bib{dickson:histvolIII}{book}{
  author={Dickson, Leonard Eugene},
  title={History of the theory of numbers. Vol. III: Quadratic and higher forms. },
  series={With a chapter on the class number by G. H. Cresse},
  publisher={Washington Carnegie Institution of Washington},
  date={1919},
  pages={v+313},
}

\bib{dokchitser:BSDmod2}{article}{
  author={Dokchitser, Tim},
  author={Dokchitser, Vladimir},
  title={On the Birch-Swinnerton-Dyer quotients modulo squares},
  journal={Ann. of Math. (2)},
  volume={172},
  date={2010},
  number={1},
  pages={567--596},
  issn={0003-486X},
  review={\MR {2680426 (2011h:11069)}},
  doi={10.4007/annals.2010.172.567},
}

\bib{hemenway:mscthesis}{thesis}{
  author={Hemenway, Brett},
  title={On recognizing congruent primes},
  type={M.Sc.\ thesis},
  school={Simon Fraser University},
  date={2006},
  eprint={http://ir.lib.sfu.ca/handle/1892/3791},
}

\bib{kramer:quad_ext}{article}{
  author={Kramer, Kenneth},
  title={Arithmetic of elliptic curves upon quadratic extension},
  journal={Trans. Amer. Math. Soc.},
  volume={264},
  date={1981},
  number={1},
  pages={121--135},
  issn={0002-9947},
  review={\MR {597871 (82g:14028)}},
  doi={10.2307/1998414},
}

\bib{kramer:large_sha}{article}{
  author={Kramer, Kenneth},
  title={A family of semistable elliptic curves with large Tate-Shafarevitch groups},
  journal={Proc. Amer. Math. Soc.},
  volume={89},
  date={1983},
  number={3},
  pages={379--386},
  issn={0002-9939},
  review={\MR {715850 (85d:14059)}},
  doi={10.2307/2045480},
}

\bib{lemmermeyer:reciprocity}{book}{
  author={Lemmermeyer, Franz},
  title={Reciprocity laws},
  series={Springer Monographs in Mathematics},
  note={From Euler to Eisenstein},
  publisher={Springer-Verlag},
  place={Berlin},
  date={2000},
  pages={xx+487},
  isbn={3-540-66957-4},
  review={\MR {1761696 (2001i:11009)}},
}

\bib{mss:fourdesc}{article}{
  author={Merriman, J. R.},
  author={Siksek, S.},
  author={Smart, N. P.},
  title={Explicit $4$-descents on an elliptic curve},
  journal={Acta Arith.},
  volume={77},
  date={1996},
  number={4},
  pages={385--404},
  issn={0065-1036},
  review={\MR {1414518 (97j:11027)}},
}

\bib{milne:ADT}{book}{
  author={Milne, J. S.},
  title={Arithmetic duality theorems},
  edition={2},
  publisher={BookSurge, LLC, Charleston, SC},
  date={2006},
  pages={viii+339},
  isbn={1-4196-4274-X},
  review={\MR {2261462 (2007e:14029)}},
}

\bib{monsky:congnum}{article}{
  author={Monsky, Paul},
  title={Mock Heegner points and congruent numbers},
  journal={Math. Z.},
  volume={204},
  date={1990},
  number={1},
  pages={45--67},
  issn={0025-5874},
  review={\MR {1048066 (91e:11059)}},
  doi={10.1007/BF02570859},
}

\bib{rubin:BSD-CM}{article}{
  author={Rubin, Karl},
  title={Tate-Shafarevich groups and $L$-functions of elliptic curves with complex multiplication},
  journal={Invent. Math.},
  volume={89},
  date={1987},
  number={3},
  pages={527--559},
  issn={0020-9910},
  review={\MR {903383 (89a:11065)}},
  doi={10.1007/BF01388984},
}

\bib{rubsil:ranks}{article}{
  author={Rubin, Karl},
  author={Silverberg, Alice},
  title={Ranks of elliptic curves},
  journal={Bull. Amer. Math. Soc. (N.S.)},
  volume={39},
  date={2002},
  number={4},
  pages={455--474 (electronic)},
  issn={0273-0979},
  review={\MR {1920278 (2003f:11080)}},
  doi={10.1090/S0273-0979-02-00952-7},
}

\bib{silverman:AEC}{book}{
  author={Silverman, Joseph H.},
  title={The arithmetic of elliptic curves},
  series={Graduate Texts in Mathematics},
  volume={106},
  publisher={Springer-Verlag},
  place={New York},
  date={1986},
  pages={xii+400},
  isbn={0-387-96203-4},
  review={\MR {817210 (87g:11070)}},
}

\bib{stevenhagen:2powers}{article}{
  author={Stevenhagen, Peter},
  title={Divisibility by $2$-powers of certain quadratic class numbers},
  journal={J. Number Theory},
  volume={43},
  date={1993},
  number={1},
  pages={1--19},
  issn={0022-314X},
  review={\MR {1200803 (94b:11113)}},
  doi={10.1006/jnth.1993.1001},
}

\bib{tunnell:congnum}{article}{
  author={Tunnell, J. B.},
  title={A classical Diophantine problem and modular forms of weight $3/2$},
  journal={Invent. Math.},
  volume={72},
  date={1983},
  number={2},
  pages={323--334},
  issn={0020-9910},
  review={\MR {700775 (85d:11046)}},
  doi={10.1007/BF01389327},
}
\end{biblist}
\end{bibdiv}

\end{document}